\DeclareFontFamily{OT1}{pzc}{}
\DeclareFontShape{OT1}{pzc}{m}{it}{<-> s * [1.10] pzcmi7t}{}
\DeclareMathAlphabet{\mathpzc}{OT1}{pzc}{m}{it}
\newtheorem{theorem}{Theorem}[section]
\newtheorem{lemma}[theorem]{Lemma}
\newtheorem{proposition}[theorem]{Proposition}
\newtheorem{remark}[theorem]{Remark}
\newcommand{\bbR}{\mathbb{R}}
\newcommand{\bbN}{\mathbb{N}}
\newcommand{\bbE}{\mathbb{E}}
\newcommand{\bbF}{\mathbb{F}}
\newcommand{\bbP}{\mathbb{P}}
\newcommand{\calA}{\mathcal{A}}
\newcommand{\calB}{\mathcal{B}}
\newcommand{\calF}{\mathcal{F}}
\newcommand{\calO}{\mathcal{O}}
\newcommand{\calI}{\mathcal{I}}
\newcommand{\mask}[1]{{}}
\definecolor{dkgreen}{rgb}{0.0, 0.5, 0.0}
\newcommand{\be}{\begin{equation}}
\newcommand{\ee}{\end{equation}}
\newcommand{\bea}{\begin{eqnarray}}
\newcommand{\eea}{\end{eqnarray}}
\newcommand{\beas}{\begin{eqnarray*}}
\newcommand{\eeas}{\end{eqnarray*}}
\DeclareMathOperator*{\esssup}{ess\,sup}
\begin{document}

\bibliographystyle{abbrv}
\title {
Deep neural network approximation for
high-dimensional
elliptic PDEs
with boundary conditions
}
\author[$\dagger$,$\ddagger$]{Philipp Grohs}

\author[$\ddagger$]{Lukas Herrmann} 

\affil[$\dagger$]{\footnotesize Faculty of Mathematics, University of Vienna, 
Oskar-Morgenstern-Platz 1,
1090 Vienna, Austria. 
\newline
\texttt{philipp.grohs@univie.ac.at}}

\affil[$\ddagger$]{\footnotesize Johann Radon Institute for Computational and Applied Mathematics, 
Austrian Academy of Sciences, 
Altenbergerstrasse 69,
 4040 Linz,
   Austria.  
   \newline
   \texttt{$\{$philipp.grohs,lukas.herrmann$\}$@ricam.oeaw.ac.at}}
\maketitle
\date{}
\begin{abstract}
In recent work it has been established that deep neural networks are capable of approximating solutions to a large class of parabolic partial differential equations without incurring the curse of dimension. 
However, all this work has been restricted to problems formulated on the whole Euclidean domain. On the other hand, most problems in engineering and the sciences are formulated on finite domains and subjected to boundary conditions.
The present paper considers an important such model problem, namely the Poisson equation on a domain $D\subset \mathbb{R}^d$ subject to Dirichlet boundary conditions.  It is shown that deep neural networks are capable of representing solutions of that problem without incurring the curse of dimension. The proofs are based on a probabilistic representation of the solution to the Poisson equation as well as a suitable sampling method. 
\end{abstract}

\noindent
{\bf Key words:} High dimensional Approximation, Neural Network Approximation, Monte Carlo Methods

\noindent
{\bf Subject Classification:} 65C99, 65M99, 60H30

\section{Introduction}
The approximation of solutions to partial differential equations (PDEs) 
in high dimensions 
by classical algorithms such as finite difference or finite element methods
is burdened by the so called \emph{curse of dimension}.
This means that the computational cost to achieve a certain accuracy depends exponentially on the dimension 
of the domain with respect to the reciprocal of the accuracy as base.
This is for example improved in the case of so called sparse tensor discretizations. 
There the logarithm of the reciprocal of the accuracy is the base, 
but the dependence with respect to the dimension is still exponential~\cite{vPS_2004}.
This curse of dimension does not appear in Monte Carlo methods, which are stochastic methods and converge in the root mean squared sense. These methods are however typically restricted to 
evaluating the solution of a given PDE at a single point rather than the full computational domain. The approximation of solutions to PDEs in high dimensions on the full computational domain hence remains a challenging problem.

Deep neural networks (DNNs) emerge as an approximation architecture with application in various areas of function approximation theory, 
which are in many cases as good as the established state of the art method, cf.~\cite{BGKP_2019, GHJv18_786,EDGB_2019,OSZ19_839,OPS_2020}.
They are also used in the context of uncertainty quantification
to approximate mappings that result in parametrized physical systems, where each realization is computationally expensive
and treated as an offline cost, cf.~\cite{LMR_2020,HSZ_2020,SZ_2019,KPRS_2019,GPRSK_2020}. 
The weights of DNNs are usually obtained by approximately solving an optimization problem with a given loss functional defined with computed training data, see for example~\cite{LMR_2020,MR_2020,EHJK_2019}. 

Recently,
there has been vivid research in the approximation of solutions to PDEs in high dimensions posed on $\bbR^d$ by DNNs,
cf.~\cite{GHJv18_786,JSW18_788,GJS19_854,GGJKS19_865,BGJ20_889}.
In~\cite{GHJv18_786}, the authors prove that
DNNs are capable to overcome the curse of dimension in the case 
of certain parabolic PDEs posed on all of $\bbR^d$.
In several recent works this ability of DNNs has also been proven for certain other PDEs on all of $\bbR^d$, also including non-linear PDEs, cf.~\cite{JSW18_788,BGJ20_889}.
However, many applications in engineering and in the sciences require the numerical solution of PDEs with boundary conditions.
Therefore, in this work, we seek to numerically approximate solutions to elliptic PDEs in bounded domains with boundary conditions 
such that the curse of dimension can be overcome.

In particular we establish the first result on the approximation of solutions to PDEs with boundary conditions without curse of dimension using DNNs. 
More precisely, we consider the Poisson equation 
\begin{equation*}
 -\Delta u =f \quad\text{in } D
 \quad\text{and}\quad 
 u\big\vert_{\partial D }=g,
\end{equation*}
where $D \subset \bbR^d$ is bounded and convex.
Our main result, Theorem~\ref{thm:main_result}, states that the solution $u$ can be approximated to within accuracy $\delta$ by a DNN of size scaling polynomially in $d$ and $\delta^{-1}$ whenever an analogous approximation property holds for the right hand side $f$, the boundary condition $g$, as well as the distance function $x\mapsto {\rm dist}(x,\partial D)$. Theorem~\ref{thm:main_result} may thus be interpreted as a ``regularity result'' in the sense that the property of being representable by DNNs without the curse of dimension is conserved under the solution operator of the Poisson equation.

We explicitly establish the required DNN approximation property for $x\mapsto {\rm dist}(x,\partial D)$ for $D$ a cube or a Euclidean ball. On the way to this result we derive a novel DNN approximation for the square root function at a spectral rate that may be of independent interest, see Lemma \ref{lem:sqrt_NN}. 
There has been another approach to approximate the Euclidean norm by DNNs based on the observation that the Euclidean norm is a rotation symmetric function, cf.~\cite{MCCANE2018119,OPS_2020}.

Theorem \ref{thm:main_result} is similar in spirit to other existing works~\cite{GHJv18_786,BGKP_2019} where Monte Carlo methods have been used to show existence of the DNN weights, cf.~\cite{GHJv18_786}.  
The approaches and techniques in the presented manuscript differ significantly for the reason that the behavior of the solution near the boundary needs to be taken into account, which complicates the analysis.

The structure of the manuscript is as follows.
In Section~\ref{sec:NNs}, we briefly recapitulate basic facts on on DNNs. 
In Section~\ref{sec:WOS}, 
we introduce the \emph{walk-on-the-sphere} algorithm and prove 
basic properties. It serves as a tool in Section~\ref{sec:PDEs_DNNs}, where
we show the existence of DNNs that approximate the solution to certain elliptic PDEs with boundary conditions.

\section{Neural networks}
\label{sec:NNs}

Let $\sigma:\bbR \to \bbR$ be the rectified linear unit (ReLU) activation function, 
which is defined by $\sigma(x) := \max\{0,x\}$, $x\in \bbR$.
We consider in general fully connected DNNs of depth $L\in \bbN$.
Let $(N_i)_{i=0,\ldots,d}$ be a sequence of positive integers. 
Let $A^i \in \bbR^{N_{i} \times N_{i-1}}$ and $b^i\in \bbR^{N_i}$, 
$i=1,\ldots,L$.
We define the realization of the DNN $\bbR^{N_0} \ni x\mapsto \phi^L(x)$ by 
\begin{equation}\label{eq:def_relu_nn}
\bbR^{N_0}\ni x \mapsto \phi^i(x)
:=
A^i
\sigma(\phi^{i-1}(x)) + b^i, 
\quad i=2,\ldots,L,
\quad 
\text{with}
\quad  
\bbR^{N_0}\ni x \mapsto \phi^1(x)
:= A^1 x + b^1, 
\end{equation}
where $\bbR^{N } \ni x \mapsto \sigma (x):= (\sigma(x_1),\ldots,\sigma(x_N)) $, $N\in\bbN$,
is defined coordinatewise.
The \emph{weights} of the ReLU DNN $\phi^L$ are the entries of $(A^i, b^i)_{i=1,\ldots,L}$. 
The size of the ReLU DNN $\phi^L$ is defined to be the number of 
non-zero weights and will be denoted by ${\rm size}(\phi^L)$. 
The width of the ReLU DNN $\phi^L$ is defined by ${\rm width}(\phi^L) = \max\{N_0,\ldots,N_L\}$
and $L$ is the depth of $\phi^L$.
Sometimes in the literature~\cite{EDGB_2019} it is distinguished between the architecture of the DNN 
and the realization, which is the function that is induced, see for example~\eqref{eq:def_relu_nn}.
In this manuscript, we shall not make this distinction, since we are mostly interested in asymptotic upper bounds of the size of ReLU DNNs. However, we note that the realization does not uniquely determine the weights.
Moreover, also the depth $L$ in the notation of the ReLU DNN $\phi^L$ shall not be made explicit in the following (meaning that we will drop the superscript $L$).
In this manuscript, we only consider DNNs with ReLU 
activation function
and will mostly write ReLU DNN or just DNN.

The following lemma is~\cite[Proposition~3]{yarotsky_2017}.
\begin{lemma}\label{lem:NN_prod_scalars}
Let $c>0$. For every $\delta\in (0,1)$, there exists a DNN $\tilde\times_\delta$ 
such that 
\begin{equation*}
 \sup_{a,b\in [-c,c]}|ab - \tilde\times_\delta(a,b)|
 \leq \delta
\end{equation*}
and ${\rm size}(\tilde\times_\delta) = \calO(\lceil\log(\delta^{-1})\rceil)$.
\end{lemma}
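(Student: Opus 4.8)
The plan is to reduce the approximation of multiplication to the approximation of the square function, and to approximate the square function by the classical sawtooth construction. First I would build, for $n\in\bbN$, a ReLU DNN $\widetilde{\mathrm{sq}}_n$ that approximates $x\mapsto x^2$ on $[0,1]$. Let $g:[0,1]\to[0,1]$ be the tent map $g(x)=2\sigma(x)-4\sigma(x-\tfrac12)$, which is realized by a ReLU network of bounded size, and let $g_s:=g\circ\cdots\circ g$ ($s$-fold) be its self-composition, an increasingly fine sawtooth with $2^{s-1}$ teeth. Writing $f_m$ for the piecewise linear interpolant of $x^2$ at the nodes $k2^{-m}$, $k=0,\ldots,2^m$, one has the telescoping identity $f_m(x)=x-\sum_{s=1}^m 2^{-2s}g_s(x)$ together with the interpolation error bound $\sup_{x\in[0,1]}|x^2-f_m(x)|\le 2^{-2m-2}$. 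The function $x-\sum_{s=1}^m 2^{-2s}g_s(x)$ can be realized by a ReLU DNN of depth $\calO(m)$ and constant width (carry through each layer the input $x$, the current sawtooth $g_s$, and the running partial sum), hence of size $\calO(m)$; this is $\widetilde{\mathrm{sq}}_m$.

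Second, I would promote this to a squaring network on $[-2c,2c]$ by affine pre- and post-composition. With $\ell(t)=(t+2c)/(4c)\in[0,1]$, set $\mathrm{Sq}_m(t):=16c^2\,\widetilde{\mathrm{sq}}_m(\ell(t))-4ct-4c^2$, which equals $16c^2\widetilde{\mathrm{sq}}_m(\ell(t))-\big((t+2c)^2-t^2\big)$; since affine maps are exactly representable, $\mathrm{Sq}_m$ has size $\calO(m)$, and $\sup_{|t|\le 2c}|t^2-\mathrm{Sq}_m(t)|=16c^2\sup_{x\in[0,1]}|\ell(t)^2-\widetilde{\mathrm{sq}}_m(\ell(t))|\le 4c^2\,2^{-2m}$. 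Finally, using the polarization identity $ab=\tfrac14\big((a+b)^2-(a-b)^2\big)$ and the fact that $a\pm b\in[-2c,2c]$ for $a,b\in[-c,c]$, I would define $\tilde\times_\delta(a,b):=\tfrac14\big(\mathrm{Sq}_m(a+b)-\mathrm{Sq}_m(a-b)\big)$, which is a fixed affine combination of two copies of $\mathrm{Sq}_m$ and therefore has size $\calO(m)$, and which satisfies $\sup_{a,b\in[-c,c]}|ab-\tilde\times_\delta(a,b)|\le\tfrac14\cdot2\cdot4c^2\,2^{-2m}=2c^2\,2^{-2m}$. Choosing $m=\lceil\tfrac12\log_2(2c^2\delta^{-1})\rceil=\calO(\lceil\log(\delta^{-1})\rceil)$ (recall $c$ is fixed, and $\log(2c^2\delta^{-1})\le\log(2c^2)+\log(\delta^{-1})$ for $\delta\in(0,1)$) makes the right-hand side at most $\delta$ and yields the claimed size bound.

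The only genuinely delicate point is keeping the size \emph{linear} in the depth $m$ rather than exponential: a literal evaluation of $g_m$ as an $m$-fold composition of $g$ is harmless on its own, but the partial sawtooths $g_1,\ldots,g_m$ must all be produced and accumulated within a single forward pass so that the width (and hence the size) stays bounded; this is precisely what the telescoping representation of $f_m$ buys us. The remaining work — tracking the affine rescalings to and from $[0,1]$ and propagating the error through the polarization identity — is routine bookkeeping.
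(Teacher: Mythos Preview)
Your argument is correct and is precisely the construction from Yarotsky's paper: the paper does not give its own proof of this lemma but simply cites \cite[Proposition~3]{yarotsky_2017}, and what you have written is a faithful reconstruction of that argument (sawtooth approximation of $x\mapsto x^2$ via the telescoping identity, affine rescaling, and the polarization identity). So your proposal matches the paper's approach exactly.
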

The following two lemmas are versions of~\cite[Lemmas~II.5 and~II.7]{EDGB_2019}.
\begin{lemma}\label{lem:composition_NNs}
Let $\phi_1$ and $\phi_2$ be two ReLU DNNs 
with input dimensions $N_0^{i}\in \bbN$ and output dimensions $N_L^{i}\in \bbN$, 
$i=1,2$, such that $N_0^1 = N_L^2$.
There exists a ReLU DNN $\phi$ such that 
${\rm size}(\phi ) = 2 {\rm size}(\phi_1 ) + 2{\rm size}(\phi_2 )  $ 
and $\phi(x) = \phi_1(\phi_2(x))$ for every $x\in\bbR^{N_0^2}$.
\end{lemma}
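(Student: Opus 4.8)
The plan is to build $\phi$ by concatenating the layers of $\phi_2$ and then those of $\phi_1$, inserting a single extra layer at the junction to repair a spurious nonlinearity. The point is that the realization~\eqref{eq:def_relu_nn} applies a ReLU between every pair of consecutive affine maps, whereas the output map of $\phi_2$ and the input map of $\phi_1$ are both affine and must \emph{not} be separated by a nonlinearity in $x\mapsto \phi_1(\phi_2(x))$. To get around this I would use the elementary identity $t=\sigma(t)-\sigma(-t)$, valid for all $t\in\bbR$ and applied coordinatewise, to undo the ReLU that the definition forces upon us at the interface.

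Write the layers of $\phi_i$ as $\phi_i^1(x)=A_i^1x+b_i^1$ and $\phi_i^j(x)=A_i^j\sigma(\phi_i^{j-1}(x))+b_i^j$ for $j=2,\ldots,L_i$, $i=1,2$. I would let $\phi$ have depth $L_2+L_1$ and specify its weights as follows. Layers $1,\ldots,L_2-1$ are copied verbatim from $\phi_2$. Layer $L_2$ uses the weight matrix obtained by stacking $A_2^{L_2}$ on top of $-A_2^{L_2}$ and the bias obtained by stacking $b_2^{L_2}$ on top of $-b_2^{L_2}$; its pre-activation value is then the vector $(\phi_2(x),-\phi_2(x))$, so that after the mandatory ReLU one has $(\sigma(\phi_2(x)),\sigma(-\phi_2(x)))$ available as the input to the next layer. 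Layer $L_2+1$ uses the weight matrix $(\,A_1^1\ \ {-}A_1^1\,)$ and bias $b_1^1$ (this is where $N_0^1=N_L^2$ enters, so the block widths match), so that by the identity above its output equals $A_1^1\phi_2(x)+b_1^1=\phi_1^1(\phi_2(x))$. Finally, layers $L_2+2,\ldots,L_2+L_1$ carry the weights $(A_1^j,b_1^j)$, $j=2,\ldots,L_1$, of $\phi_1$. If $L_2=1$ the affine layer $L_2$ acts directly on $x$, and if $L_1=1$ the construction stops at layer $L_2+1$; both degenerate cases are covered by exactly the same formulas.

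A short induction on $j$ then shows that the pre-activation of layer $L_2+j$ of $\phi$ equals $\phi_1^j(\phi_2(x))$ for every $j=1,\ldots,L_1$: the base case $j=1$ was just verified, and the induction step uses that layers $L_2+2,\ldots,L_2+L_1$ of $\phi$ are literal copies of layers $2,\ldots,L_1$ of $\phi_1$. In particular the realization of $\phi$ is $x\mapsto \phi_1^{L_1}(\phi_2(x))=\phi_1(\phi_2(x))$. For the size, the layers $1,\ldots,L_2-1$ inherited from $\phi_2$ together with the modified layer $L_2$ contribute the number of nonzero weights of layers $1,\ldots,L_2-1$ of $\phi_2$ plus twice that of layer $L_2$ of $\phi_2$, hence at most $2\,{\rm size}(\phi_2)$; symmetrically, layer $L_2+1$ together with the layers $L_2+2,\ldots,L_2+L_1$ inherited from $\phi_1$ contribute at most $2\,{\rm size}(\phi_1)$ (the doubled block $(\,A_1^1\ {-}A_1^1\,)$ accounting for the factor $2$). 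Summing yields ${\rm size}(\phi)\le 2\,{\rm size}(\phi_1)+2\,{\rm size}(\phi_2)$, which is the asserted bound.

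There is no genuine difficulty here. The only thing that needs care is the bookkeeping at the interface — making sure exactly one ReLU sits between the copy of $\phi_2$ and the copy of $\phi_1$ and that it is neutralized by the $t=\sigma(t)-\sigma(-t)$ trick — together with a quick check that the edge cases $L_1=1$ and $L_2=1$ fit the same description and do not require a separate argument.
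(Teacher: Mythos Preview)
Your construction is correct and is precisely the standard one: split the output of $\phi_2$ into its positive and negative parts at the interface, then recombine via $(A_1^1\ \ {-}A_1^1)$ so that the forced ReLU is neutralized by $t=\sigma(t)-\sigma(-t)$. The paper itself does not give a proof of this lemma; it simply cites \cite[Lemma~II.5]{EDGB_2019}, whose argument is exactly the one you wrote down. One minor remark: your count gives ${\rm size}(\phi)\le 2\,{\rm size}(\phi_1)+2\,{\rm size}(\phi_2)$ rather than equality, but this inequality is what is actually needed (and what the cited reference states); the ``$=$'' in the lemma should be read as an upper bound.
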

\begin{lemma}\label{lem:additions_NNs}
Let $\phi_{i}$, $i=1,\ldots, n$, be ReLU DNNs with the same input dimension $N_0\in\bbN$, ${\rm size}(\phi_i)=M_i$, ${\rm width}(\phi_i) = W_i$ and depths $L_i $, $i=1,\ldots,n$.
Let $a_i$, $i=1,\ldots,n$, be scalars.
Then there exists a ReLU DNN $\phi$
with ${\rm size}(\phi) = \sum_{i=1}^n  [M_i + W_i + 2(L - L_i) +1] $
such that 
$\phi(x) =\sum_{i=1}^n a_i \phi_i(x)$ for every $x\in \bbR^{N_0}$,
where $L = \max\{ L_1 ,\ldots, L_n\}$.
\end{lemma}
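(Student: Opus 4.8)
The plan is to build $\phi$ by \emph{parallelizing} $\phi_1,\dots,\phi_n$ and then collapsing their outputs with one affine layer that realizes $(y_1,\dots,y_n)\mapsto\sum_{i=1}^n a_i y_i$. Since all $\phi_i$ share the input dimension $N_0$, one feeds $x\mapsto(x,\dots,x)$ into the ``direct sum'' network $\phi_1\oplus\dots\oplus\phi_n$ built from block-diagonal weight matrices; because ${\rm size}$ counts only \emph{nonzero} weights, the off-diagonal zero blocks are free, so the parallel body contributes exactly $\sum_i M_i$. The genuine obstruction is that the $\phi_i$ have different depths $L_i$, so before stacking we must replace each $\phi_i$ by a network $\tilde\phi_i$ of common depth $L=\max_i L_i$ with $\tilde\phi_i\equiv\phi_i$ as a function, and we must do this cheaply. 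Note that we cannot simply invoke Lemma~\ref{lem:composition_NNs} to compose $\phi_i$ with an identity network, since that would incur a factor $2$ on ${\rm size}(\phi_i)=M_i$; the padding must be done ``in place''.

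For the depth synchronization I would use the ReLU identity $t=\sigma(t)-\sigma(-t)$. Writing $\phi_i=C_i\,\sigma\!\big(\phi_i^{L_i-1}(\cdot)\big)+c_i$ for the terminal affine map of $\phi_i$, I would keep layers $1,\dots,L_i-1$ of $\phi_i$ untouched, and then append a ``conveyor'': in layer $L_i$ emit the sign-split value $\big(\sigma(\phi_i(x)),\,\sigma(-\phi_i(x))\big)$ (built by combining $C_i,c_i$ with the $\pm$-duplication matrix), carry it unchanged through layers $L_i+1,\dots,L-1$ using identity matrices, and recombine it in layer $L$ via $(\mathrm I,-\mathrm I)$. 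The key point, and the step needing care, is that the carried vector is \emph{coordinatewise nonnegative}, hence invariant under a further $\sigma$, so each of the $L-L_i-1$ pass-through layers costs only a bounded number of nonzero weights rather than $\sim W_i$; charging the single splitting/duplication layer (of size at most $W_i$, together with the re-placement of the terminal map) with the $W_i$ term and the conveyor with $2(L-L_i)$ yields the overhead $W_i+2(L-L_i)$ for the $i$-th branch. (In the scalar-output case, which is all that is used later, the conveyor carries two neurons per layer, which is exactly the $2(L-L_i)$ charged; in general the output dimension would enter here.)

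It remains to absorb the scalars $a_i$ and the final summation: since $\sum_i a_i\big(C_i\,\sigma(h_i(x))+c_i\big)=\big[a_1C_1\,|\cdots|\,a_nC_n\big]\,\sigma\big(h_1(x),\dots,h_n(x)\big)+\sum_i a_i c_i$, this is one affine map whose extra nonzero count is absorbed into the $+1$-per-branch slack. One then checks the functional identity $\phi(x)=\sum_i a_i\phi_i(x)$ layer by layer, confirms ${\rm depth}(\phi)=L$, and tallies the nonzero weights: $\sum_i M_i$ from the parallel body, $\sum_i W_i$ from the synchronization layers, $\sum_i 2(L-L_i)$ from the conveyors, and $\sum_i 1$ from the merge, giving the claimed ${\rm size}(\phi)=\sum_{i=1}^n\big[M_i+W_i+2(L-L_i)+1\big]$. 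I would also note the edge cases $L_i=L$ (no conveyor) and $n=1$, and that the count relies on the ``nonzero weights'' convention for ${\rm size}$. The main obstacle throughout is the cheap in-place depth synchronization; the rest is bookkeeping.
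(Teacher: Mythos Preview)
The paper does not give its own proof of this lemma: it simply states the result as a ``version of~\cite[Lemmas~II.5 and~II.7]{EDGB_2019}'' and moves on. So there is nothing to compare your construction against in the paper itself; the relevant comparison is with the cited reference, and your approach---parallelize via block-diagonal weight matrices, synchronize depths using the ReLU identity $t=\sigma(t)-\sigma(-t)$ so that nonnegative conveyor channels pass through additional $\sigma$-layers for free, then merge with a single final affine map---is exactly the standard construction used there. In that sense your proposal is correct and matches the intended argument.

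One small bookkeeping point: when you say the splitting/duplication layer has ``size at most $W_i$'', this is where the scalar-output assumption silently enters. Replacing the terminal map $(A^{L_i},b^{L_i})$ by its sign-split $\bigl(\begin{smallmatrix}A^{L_i}\\-A^{L_i}\end{smallmatrix},\begin{smallmatrix}b^{L_i}\\-b^{L_i}\end{smallmatrix}\bigr)$ adds exactly the number of nonzero entries in $(A^{L_i},b^{L_i})$, which for a one-dimensional output is at most $N_{L_i-1}+1\le W_i+1$; the surplus $+1$ is then absorbed into the final ``$+1$'' slot you already budget for the merge. You flag this correctly when you note that ``in general the output dimension would enter here''. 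Since the paper only ever applies the lemma to scalar-valued networks and only uses the asymptotic size bound, this is harmless, but it would be worth making explicit that the exact constant in the stated bound is for the scalar-output case.
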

Throughout this manuscript, we will construct ReLU DNNs mostly by composition and addition of already existing ReLU DNNs. The asserted upper bounds in later parts of the manuscript on the size of certain DNNs then result by
Lemmas~\ref{lem:composition_NNs} and~\ref{lem:additions_NNs}.
 
\section{Basics on the walk-on-the-sphere algorithm} 
\label{sec:WOS}

In this work we consider the following elliptic PDE with Dirichlet boundary conditions, 
\begin{equation}\label{eq:elliptic_PDE}
 -\Delta u =f \quad\text{in } D
 \quad\text{and}\quad 
 u\big\vert_{\partial D }=g.  
\end{equation}
Here, $D\subset \bbR^d$ is a convex, bounded domain and $f$ and $g$ are continuous functions.
To this end,
for any $x_0\in \bbR^d$ and $c>0$
$B(x_0,c) := \{x\in \bbR^d : |x-x_0|\leq c\}$. 
Let $C^{k}(\overline{D})$ denote the $k$-times continuously differentiable functions on the closure $\overline{D}$, $k\in\bbN$. 
Denote by $L^\infty(D)$ the space of essentially bounded functions on $D$ with the usual norm 
$v\mapsto \|v\|_{L^{\infty}(D)}:=\esssup_{x\in D}|v(x)|$.
The volume of the domain $D$ with respect to the Lebesgue measure is denoted by $|D|$
and the diameter is denoted by ${\rm diam}(D)$.
The Euclidean norm on $\bbR^d$ will be denoted 
by $x\mapsto |x|$.
For any measured space $(B,\calB,\nu)$ and separable Hilbert space $F$
we denote the Hilbert space of $F$-valued square integrable functions
by $L^2(B,\nu;F)$.
In the case that $\nu$ is the Lebesgue measure and $F$ the real numbers, we shall simply write $L^2(B)$.

We shall recall some elements from the theory of Brownian motion.
Let $(\Omega, \bbF, \calA, \bbP )$ be a filtered probability space and let 
$W_t$, $t\geq 0$, denote a $d$-dimensional Brownian motion, which is adapted to the filtration 
$\bbF=(\calF_t)_{t\geq 0}$, i.e., $W_t$ is $\calF_t$-measurable. 
Define the stochastic process 
\begin{equation*}
 X_t = X_0 + W_t, 
 \quad 
 t\geq 0,
\end{equation*}
where $X_0$ is $\calF_0$-measurable. 
Further, let us denote by $\bbP_x$ the probability measure conditioned on $X_0 =x$
for every $x\in \bbR^d$. 
The expectation with respect to $\bbP_x$ will be denoted by $\bbE_x(\cdot)$.

For any open, non-empty set $\tilde{D}\subset \bbR^d$, define the first exit time of the process $X_t$ starting at $x\in \tilde{D}$ from $\tilde{D}$
by
\begin{equation*}
\tau_{\tilde{D}} :=
\inf\{t\geq 0 : X_t\notin \tilde{D}\}.
\end{equation*} 
Furthermore, 
for any $\varepsilon\in (0,1)$ define the subdomain $D_\varepsilon$ of $D$
by
\begin{equation}\label{eq:def_D_eps}
D_\varepsilon : = \{ x\in D : {\rm dist}(x,\partial D ) > \varepsilon  \}
.
\end{equation}

\begin{lemma}\label{lem:tau_r_ball_exp}
For any $r>0$ and $x\in B(0,r)\subset \bbR^d$, 
it holds that 
\begin{equation*}
\bbE_x(\tau_{B(0,r)})
=
\frac{r^2 - |x|^2}{d}
\end{equation*}
\end{lemma}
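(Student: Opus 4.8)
The plan is to use Dynkin's formula applied to the function $v(x) = |x|^2$, which has $\Delta v = 2d$, and exploit the fact that the exit time $\tau_{B(0,r)}$ from a bounded domain has finite expectation. Concretely, since $D = B(0,r)$ is bounded, standard estimates on Brownian motion give $\bbE_x(\tau_{B(0,r)}) < \infty$ for every $x\in B(0,r)$; this finiteness is what makes the subsequent limiting argument legitimate, so I would either recall it as known or give the short sub/supermartingale argument for it first. Then I would apply Itô's formula (or directly Dynkin's formula for $C^2$ functions and stopping times with finite expectation) to $v(X_t) = |X_0 + W_t|^2$ between time $0$ and $t\wedge \tau_{B(0,r)}$.

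The key computation is
\begin{equation*}
\bbE_x\!\left(|X_{t\wedge\tau_{B(0,r)}}|^2\right)
= |x|^2 + \bbE_x\!\left(\int_0^{t\wedge\tau_{B(0,r)}} \tfrac{1}{2}\Delta v(X_s)\dd s\right)
= |x|^2 + d\,\bbE_x\!\left(t\wedge\tau_{B(0,r)}\right),
\end{equation*}
where the stochastic integral term drops out in expectation because $\nabla v$ is bounded on the ball, so the local martingale is a genuine martingale up to the bounded stopping time $t\wedge\tau_{B(0,r)}$. Here I am using the convention that $X_t = X_0 + W_t$ with $W_t$ a standard $d$-dimensional Brownian motion, whose generator is $\tfrac12\Delta$; if the paper's normalization differs the constant $d$ would be adjusted accordingly, but with the stated setup this is the right factor.

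Next I would let $t\to\infty$. On the right-hand side, monotone convergence gives $\bbE_x(t\wedge\tau_{B(0,r)}) \to \bbE_x(\tau_{B(0,r)})$. On the left-hand side, $|X_{t\wedge\tau_{B(0,r)}}|^2 \le r^2$ for all $t$ (the process is stopped upon leaving the closed ball, so it stays in $\overline{B(0,r)}$), so dominated convergence applies; since $\tau_{B(0,r)} < \infty$ almost surely (which follows from $\bbE_x(\tau_{B(0,r)})<\infty$), we get $|X_{t\wedge\tau_{B(0,r)}}|^2 \to |X_{\tau_{B(0,r)}}|^2 = r^2$ almost surely, because at the exit time the continuous path $X$ lies on the boundary $\partial B(0,r) = \{|y| = r\}$. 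Hence $\bbE_x(|X_{t\wedge\tau_{B(0,r)}}|^2) \to r^2$. Combining the two limits yields $r^2 = |x|^2 + d\,\bbE_x(\tau_{B(0,r)})$, which rearranges to the claimed identity.

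The main obstacle is the justification of the interchange of limit and expectation on the left-hand side together with the a priori finiteness $\bbE_x(\tau_{B(0,r)}) < \infty$; once those are in place the rest is a direct application of Dynkin's formula. The domination $|X_{t\wedge\tau_{B(0,r)}}| \le r$ makes the left-hand limit easy, so the real work is the finiteness of the mean exit time, which itself can be obtained by applying the same Dynkin computation to $v(x)=|x|^2$ with $t\wedge\tau_{B(0,r)}$ finite-horizon, noting $\bbE_x(|X_{t\wedge\tau_{B(0,r)}}|^2)\ge 0$ hence $d\,\bbE_x(t\wedge\tau_{B(0,r)}) \le r^2 - |x|^2 \le r^2$ for every $t$, and then letting $t\to\infty$ by monotone convergence — so in fact the whole proof can be packaged into a single application of Itô's formula followed by two elementary limit passages.
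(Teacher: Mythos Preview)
Your argument is correct and is the standard self-contained proof of this classical identity. The paper itself does not give an argument at all: its entire proof is a citation to \cite[Proposition~3.1.8]{BM_Port_1978}. So there is no methodological comparison to make beyond noting that you supply the details where the paper defers to the literature; the Dynkin/It\^o computation you outline is in fact exactly how that cited result is established.

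One small slip in your finiteness bootstrap: from $\bbE_x(|X_{t\wedge\tau_{B(0,r)}}|^2) = |x|^2 + d\,\bbE_x(t\wedge\tau_{B(0,r)})$ you wrote ``noting $\bbE_x(|X_{t\wedge\tau_{B(0,r)}}|^2)\ge 0$'' to deduce $d\,\bbE_x(t\wedge\tau_{B(0,r)}) \le r^2 - |x|^2$. Nonnegativity gives the wrong inequality; what you need (and already observed earlier in the proof) is the upper bound $|X_{t\wedge\tau_{B(0,r)}}|^2 \le r^2$, since the stopped process remains in $\overline{B(0,r)}$. With that correction the finiteness argument and the rest go through exactly as you describe.
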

\begin{proof}
This is explicitly~\cite[Proposition~3.1.8]{BM_Port_1978}.
\end{proof}

\begin{lemma}\label{lem:expectation_tau-tau_eps}
Let $D\subset \bbR^d$ be a bounded convex domain. 
For every $\varepsilon \in (0,1)$ such that $D_\varepsilon\subset D$ defined in~\eqref{eq:def_D_eps}is not empty
and for every $x\in D_\varepsilon$,
\begin{equation*}
\bbE_x(\tau_D - \tau_{D_\varepsilon})
\leq 
{\rm diam}(D) \varepsilon
.
\end{equation*}
\end{lemma}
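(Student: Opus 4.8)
\emph{Proof proposal.} The plan is to reduce the estimate, via the strong Markov property, to a one-dimensional exit-time computation that convexity makes available. Since $D_\varepsilon\subseteq D$ we have $\tau_{D_\varepsilon}\le\tau_D$, and since ${\rm dist}(\cdot,\partial D)\ge\varepsilon>0$ on $\overline{D_\varepsilon}$ we in fact have $\overline{D_\varepsilon}\subset D$; hence $X_{\tau_{D_\varepsilon}}$ lies in $D$ on $\partial D_\varepsilon$, and by continuity of the distance function ${\rm dist}(X_{\tau_{D_\varepsilon}},\partial D)=\varepsilon$. Boundedness of $D$ gives $\bbE_x(\tau_D),\bbE_x(\tau_{D_\varepsilon})<\infty$, so applying the strong Markov property at the stopping time $\tau_{D_\varepsilon}$ should yield
\[
\bbE_x(\tau_D-\tau_{D_\varepsilon})=\bbE_x\bigl[\bbE_{X_{\tau_{D_\varepsilon}}}(\tau_D)\bigr].
\]
It therefore suffices to prove that $\bbE_y(\tau_D)\le{\rm diam}(D)\,\varepsilon$ whenever $y\in D$ satisfies ${\rm dist}(y,\partial D)=\varepsilon$.

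To obtain this I would exploit convexity as follows. Let $z\in\partial D$ be a nearest boundary point to $y$, so $|y-z|=\varepsilon$ and the open ball around $y$ of radius $\varepsilon$, being connected, disjoint from $\partial D$, and meeting $D$, is contained in $D$, whence $B(y,\varepsilon)\subseteq\overline D$. A supporting hyperplane of the convex body $\overline D$ at $z$ must also support the ball $B(y,\varepsilon)$ at $z$, so its outward unit normal is forced to be $n:=(z-y)/\varepsilon$, giving $\langle w-z,n\rangle\le 0$ for all $w\in D$. Setting $h(w):=\langle z-w,n\rangle$, one checks that $h(y)=\varepsilon$, that $0\le h\le{\rm diam}(D)$ on $D$ (the upper bound from $h(w)\le|z-w|$), and that both inequalities are strict on the open set $D$. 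Thus $D$ is contained in the open slab $S:=\{w:0<h(w)<{\rm diam}(D)\}$ of width ${\rm diam}(D)$, with $y$ at signed distance $\varepsilon$ from one face.

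It then remains to estimate the exit time from $S$. Under $\bbP_y$ the real-valued process $h(X_t)=\varepsilon-\langle W_t,n\rangle$ is a one-dimensional Brownian motion started at $\varepsilon$, since $|n|=1$; and because $X_t\in D\subseteq S$ for $t<\tau_D$ we get $\tau_D\le\tau_S$, where $\tau_S=\inf\{t:h(X_t)\notin(0,{\rm diam}(D))\}$ is precisely the first exit time of this one-dimensional Brownian motion from $(0,{\rm diam}(D))$. By translation invariance of Brownian motion this interval may be identified with the ball $B(0,{\rm diam}(D)/2)\subset\bbR^1$, and Lemma~\ref{lem:tau_r_ball_exp} with $d=1$ then gives $\bbE_y(\tau_S)=\varepsilon({\rm diam}(D)-\varepsilon)\le{\rm diam}(D)\,\varepsilon$. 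Substituting this into the strong Markov identity above completes the argument.

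\emph{Main obstacle.} The one genuinely delicate step is the geometric one: deducing from ``${\rm dist}(y,\partial D)=\varepsilon$ and $D$ convex'' that $D$ lies inside an \emph{open} slab of width ${\rm diam}(D)$ one of whose faces passes through the nearest boundary point $z$ with normal exactly $(z-y)/\varepsilon$. Pinning down this normal via the supporting hyperplane of $\overline D$ at $z$ (which is forced to be tangent to the inscribed ball $B(y,\varepsilon)$), and verifying that $0<h<{\rm diam}(D)$ strictly on $D$ so that $\tau_D\le\tau_S$ holds with no boundary ambiguity, is where care is required; the probabilistic ingredients — the strong Markov decomposition at $\tau_{D_\varepsilon}$, finiteness of the two expectations, and the one-dimensional exit-time formula from Lemma~\ref{lem:tau_r_ball_exp} — are then routine.
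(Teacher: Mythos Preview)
Your proposal is correct and follows essentially the same route as the paper: apply the strong Markov property at $\tau_{D_\varepsilon}$, use convexity to trap $D$ in a slab with one face a supporting hyperplane through a nearest boundary point, and reduce to the one-dimensional exit-time formula. Your geometric argument is in fact more carefully justified than the paper's (you pin down the normal direction via the inscribed ball $B(y,\varepsilon)$), and your slab has width ${\rm diam}(D)$ rather than the paper's ${\rm diam}(D)+\varepsilon$, yielding the marginally sharper intermediate bound $\varepsilon({\rm diam}(D)-\varepsilon)$ before passing to ${\rm diam}(D)\,\varepsilon$.
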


\begin{proof}
By the tower property,
\begin{equation*}
\bbE_x(\tau_D - \tau_{D_\varepsilon})
=
\bbE_x(\bbE_{X_{\tau_{D_\varepsilon}}}(\tau_D - \tau_{D_\varepsilon}))
=
\bbE_x(\bbE_{X_{\tau_{D_\varepsilon}}}(\tau_D ))
.
\end{equation*}
Note that $X_t$, $t\geq 0$, under the measure $\bbP_{X_{\tau_{D_{\varepsilon}}}}$
has the same distribution as $X_{\tau_{D_\varepsilon}} + \widetilde{W}_{t}$, $t\geq 0$, 
where $\widetilde{W}_t$, $t\geq 0$, is a Brownian motion that is indenpendent from ${W}_t$, $t\geq 0$.
To estimate the expectation of the stopping time conditioned on $X_{\tau_{D_{\varepsilon}}}$, we use that there exist two parallel hyperplanes $A,B$ of dimension $d-1$ which do not intersect with $D$. 
One of the hyperplane satisfies that ${\rm dist}(X_{\tau_{D_{\varepsilon}}},A)=\varepsilon$. 
The other hyperplane $B$ satisfies that ${\rm dist}(X_{\tau_{D_{\varepsilon}}},B)={\rm diam}(D)$. 
Specifically, there exists a unit vector $c\in \bbR^d$ and $\alpha,\beta\in\bbR$
such that $A = \{v = x + \alpha c: x\in\bbR^d ,x^\top c =0\}$
and $B = \{v = x + \beta c: x\in\bbR^d ,x^\top c =0\}$, where $\beta<\alpha$ and $\alpha-\beta = \varepsilon + {\rm diam}(D)$.
They exist due to the assumed convexity of the domain $D$.
Also define the unbounded domain of points in between $A$ and $B$ by 
$\widetilde{D}=\{x\in \bbR^d : x^\top c \in (\beta,\alpha)\}$.
It follows that $\bbE_{X_{\tau_{D_\varepsilon}}}(\tau_D ) \leq \bbE_{X_{\tau_{D_\varepsilon}}}(\tau_{\widetilde{D}} )$.
Under the measure $\bbP_{X_{\tau_{D_{\varepsilon}}}}$, the stopping time $\tau_{\widetilde{D}}$
satisfies that 
$\tau_{\widetilde{D}}
= \inf\{t\geq 0 : X_t^\top c - \alpha \notin (-{\rm diam}(D),\varepsilon)\}$.
Moreover since $c$ is a unit vector, under the measure $\bbP_{X_{\tau_{D_{\varepsilon}}}}$,
the process $X_t^\top c - \alpha$ is a one-dimensional 
Brownian motion that starts at zero.
Thus, 
$\bbE_{X_{\tau_{D_\varepsilon}}}(\tau_{\widetilde{D}} ) 
={\rm diam}(D)\varepsilon$.
See Figure~\ref{fig:acc_WOS} for a geometric illustration.

\begin{figure}[h]
  \centering \includegraphics[width=0.85\textwidth]{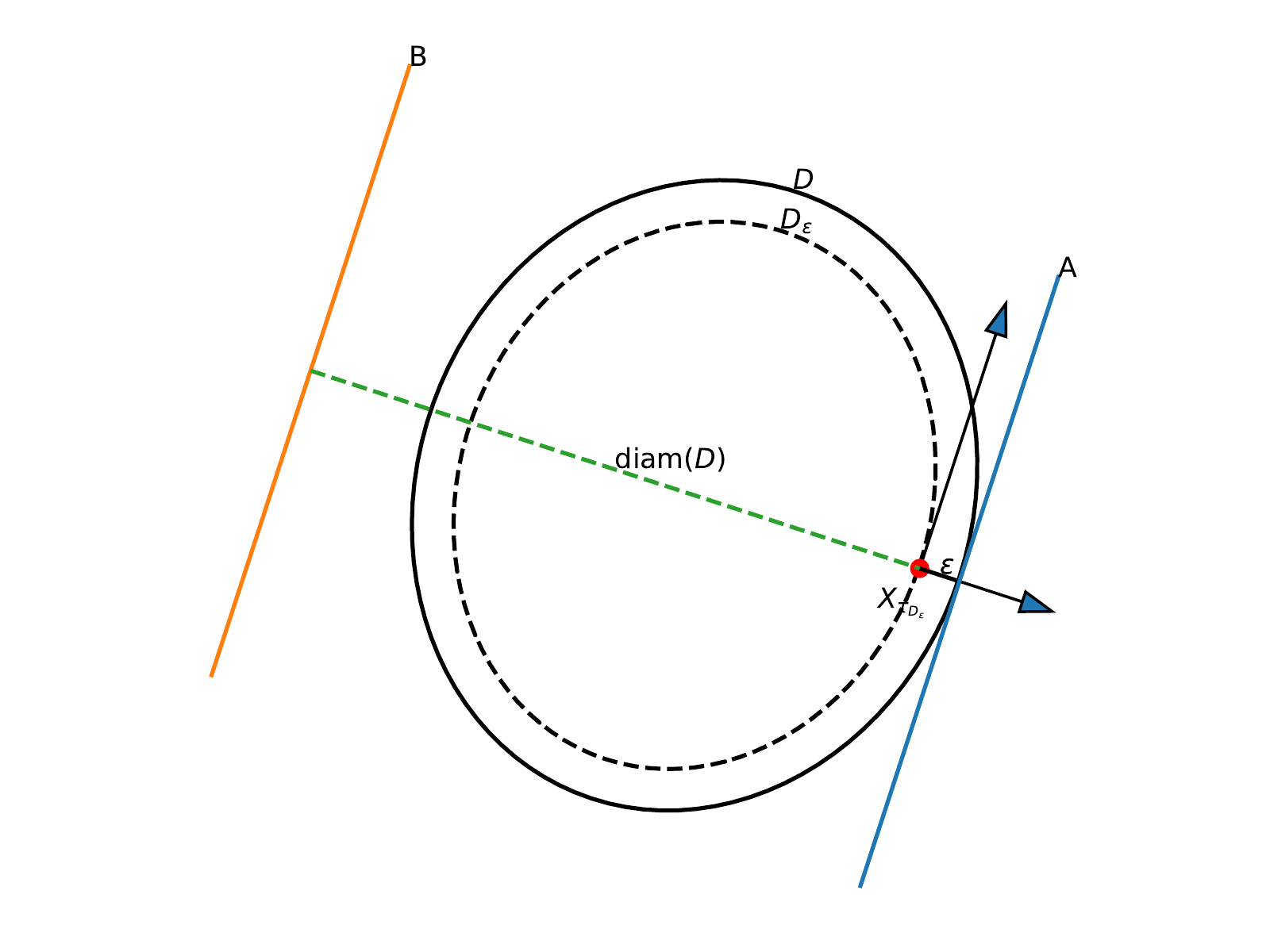}
  \caption{Illustration of a realization of $X_{\tau_{D_\varepsilon}}$. Here $D$ is given by an ellipse.
  The domain $D$ may be positioned in between two hyperplanes $A$ and $B$ such that 
  ${\rm dist}(X_{\tau_{D_\varepsilon}},\partial D )={\rm dist}(X_{\tau_{D_\varepsilon}},A )=\varepsilon$.}
  \label{fig:acc_WOS}
\end{figure}
We recall the fact that for a one dimensional Brownian motion starting at zero, the expected time 
such that it leaves the interval $[a,b]$ for $a<0<b$ is equal to $|ab|$, 
see~\cite[Proposition~2.2.20]{BM_Port_1978}.
Thus the claimed estimate follows. 
\end{proof}

The following result is also implied by~\cite[Theorems~9.13 and~9.17]{Oeksendal}. 
We give a proof to establish some techniques to be used throughout this section.

\begin{proposition}\label{prop:Feynman--Kac_repr}
Suppose that $D\subset \bbR^d$ is convex and bounded.
Let $f$ be H\"older continuous and let $g$ be extendable to $\overline{D}$ such that $g\in C^2(\overline{D})$.
Then, for every $x\in \overline{D}$,
\begin{equation*}
u(x) 
=
\bbE_x(g(X_{\tau_D}))
+
\frac{1}{2}
\bbE_x\left( \int_{0}^{\tau_D}f(X_s){\rm d}s\right).
\end{equation*}
\end{proposition}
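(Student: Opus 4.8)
The plan is to verify the Feynman--Kac formula by the standard route: apply Itô's formula to a suitably chosen function of the process $X_t$ stopped at $\tau_D$, take expectations so that the martingale term vanishes, and identify the remaining terms with the claimed representation. Throughout I would use that $\bbE_x(\tau_D) < \infty$ for bounded convex $D$: indeed $\tau_D \le \tau_{B(0,r)}$ for any ball containing $D$, so Lemma~\ref{lem:tau_r_ball_exp} gives integrability of $\tau_D$, which justifies the interchange of limits and expectations below and guarantees the terms in the statement are finite.

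First I would treat the boundary term. Let $u$ solve~\eqref{eq:elliptic_PDE}. Since $D$ is convex and bounded and $f$ is Hölder continuous, classical Schauder theory gives $u\in C^2(D)\cap C(\overline D)$; moreover convexity of $D$ makes every boundary point regular, so $x\mapsto \bbE_x(g(X_{\tau_D}))$ is the (bounded, continuous) solution of the Laplace equation with data $g$. The idea is to apply Itô's formula to $t\mapsto w(X_{t\wedge\tau_D})$ where $w$ is harmonic in $D$. To avoid issues with $C^2$ regularity up to $\partial D$, I would first work on the subdomains $D_\varepsilon$ of~\eqref{eq:def_D_eps}: on $D_\varepsilon$ the relevant functions are $C^2$ up to the closure, Itô applies cleanly on $[0,\tau_{D_\varepsilon}]$, the $dW$-term is a true martingale (bounded integrand on the compact $\overline{D_\varepsilon}$), and $\tfrac12\Delta w = 0$ kills the drift term, yielding $w(x) = \bbE_x(w(X_{\tau_{D_\varepsilon}}))$. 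Then let $\varepsilon\downarrow 0$: $\tau_{D_\varepsilon}\uparrow\tau_D$ a.s. (here $D_\varepsilon\uparrow D$ and $X$ has continuous paths), $X_{\tau_{D_\varepsilon}}\to X_{\tau_D}\in\partial D$, and continuity of $w$ on $\overline D$ plus dominated convergence (with $\|w\|_{L^\infty}\le\|g\|_{L^\infty}$ by the maximum principle) give $w(x)=\bbE_x(g(X_{\tau_D}))$.

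Next I would handle the particular solution. Set $v(x) := \tfrac12\bbE_x\bigl(\int_0^{\tau_D} f(X_s)\,ds\bigr)$; this is finite since $|v(x)|\le\tfrac12\|f\|_{L^\infty(D)}\,\bbE_x(\tau_D)<\infty$. Writing $u = (u-v) + v$, it suffices to show $u-v$ is harmonic in $D$ with boundary values $g$, for then the previous paragraph applies to $w := u - v$ and the proof closes by adding the two representations. Applying Itô's formula to $u(X_{t\wedge\tau_{D_\varepsilon}})$ on $D_\varepsilon$, where now $\tfrac12\Delta u = -\tfrac12 f$, gives $u(x) = \bbE_x\bigl(u(X_{\tau_{D_\varepsilon}})\bigr) + \tfrac12\bbE_x\bigl(\int_0^{\tau_{D_\varepsilon}} f(X_s)\,ds\bigr)$; letting $\varepsilon\downarrow 0$, the strong Markov property (as used in the proof of Lemma~\ref{lem:expectation_tau-tau_eps}, via the tower property) shows $\bbE_x(u(X_{\tau_{D_\varepsilon}})) \to \bbE_x(g(X_{\tau_D})) + v(x) - \tfrac12\bbE_x(\int_0^{\tau_D} f) + \tfrac12\bbE_x(\int_0^{\tau_{D_\varepsilon}} f)$ — more cleanly, one simply passes to the limit in both terms using $\int_0^{\tau_{D_\varepsilon}} f(X_s)\,ds \to \int_0^{\tau_D} f(X_s)\,ds$ (dominated by $\|f\|_\infty\tau_D$) and the continuity-of-$u$ argument above, obtaining directly $u(x) = \bbE_x(g(X_{\tau_D})) + v(x)$.

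The main obstacle I anticipate is the limit $\varepsilon\downarrow 0$, specifically controlling $\bbE_x\bigl(u(X_{\tau_{D_\varepsilon}})\bigr) \to \bbE_x\bigl(g(X_{\tau_D})\bigr)$: one needs that $X_{\tau_{D_\varepsilon}}$ converges to $X_{\tau_D}$ (path continuity and $\tau_{D_\varepsilon}\uparrow\tau_D$, the latter requiring that $X$ actually exits $D$ — true since $\bbE_x(\tau_D)<\infty$ forces $\tau_D<\infty$ a.s.) together with continuity of $u$ up to $\partial D$ and uniform integrability (clear from boundedness of $u$ on $\overline D$). The remaining ingredients — Itô's formula on smooth subdomains, vanishing of the martingale term, and monotone/dominated convergence for the time integral — are routine given Lemmas~\ref{lem:tau_r_ball_exp} and~\ref{lem:expectation_tau-tau_eps}.
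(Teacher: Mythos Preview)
Your argument is correct and follows the same overall skeleton as the paper: apply It\^o's formula to $u(X_t)$ on the exhausting subdomains $D_\varepsilon$, use the optional stopping theorem to kill the stochastic integral, and then pass to the limit $\varepsilon\downarrow 0$. The middle paragraph about splitting $u$ into a harmonic part and a particular solution is a detour you yourself abandon (``more cleanly, one simply passes to the limit\dots''); the direct route you arrive at is exactly the paper's.

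Where you and the paper differ is in justifying the limit $\bbE_x(u(X_{\tau_{D_\varepsilon}}))\to\bbE_x(g(X_{\tau_D}))$. You argue qualitatively: $\tau_{D_\varepsilon}\uparrow\tau_D$ a.s., path continuity gives $X_{\tau_{D_\varepsilon}}\to X_{\tau_D}$, and continuity of $u$ on $\overline D$ together with the dominated convergence theorem (bound $|u|\le\|u\|_{L^\infty(\overline D)}$) finishes it. The paper instead invokes a Lipschitz regularity result of Cianchi--Maz'ya for $u$ on $\overline D$ and then uses the quantitative bound $\bbE_x(|X_{\tau_D}-X_{\tau_{D_\varepsilon}}|)\le\sqrt{\bbE_x(\tau_D-\tau_{D_\varepsilon})}\le\sqrt{{\rm diam}(D)\,\varepsilon}$ from Lemma~\ref{lem:expectation_tau-tau_eps}. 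Your route is more elementary (no external Lipschitz regularity needed); the paper's route yields an explicit rate in $\varepsilon$, which is not needed for the proposition itself but is consistent with the quantitative style used later in Section~\ref{sec:WOS}.
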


\begin{proof}
We will first establish a formula of the type asserted in this proposition in the interior of $D$ and then 
extend it to also incorporate the boundary.

The assumed convexity of the domain $D$ implies that all boundary points are \emph{regular} in the
sense of~\cite{GilbargTrudinger};
for details see \cite[pp.~25, 27]{GilbargTrudinger}. 
In conjunction with~\cite[Theorem~4.3]{GilbargTrudinger}, it follows that
the solution $u$ to \eqref{eq:elliptic_PDE} exists and is unique.
More precisely by~\cite[Lemma~4.2]{GilbargTrudinger}, $u$ is twice continuously differentiable in $D$. 

Let $\varepsilon >0$ be arbitrary. 
Recall $D_\varepsilon : = \{ x\in D : {\rm dist}(x,\partial D ) > \varepsilon  \}$. 
Suppose that $\varepsilon$ is sufficiently small such that $D_\varepsilon$ 
is not empty.
By Ito's formula (see for example~\cite[Theorem~17.8]{Schilling_2014}), for every $x\in D_{\varepsilon}$
\begin{equation*}
u(X_{\tau_{D_\varepsilon}}) -  u(x) 
= \int_{0}^{\tau_{D_\varepsilon}} \nabla u(X_t) \cdot {\rm d}W_t 
+ \frac{1}{2}\int_{0}^{\tau_{D_\varepsilon}} \Delta u(X_t) {\rm d}t. 
\end{equation*}
The optional stopping theorem (see for example~\cite[Theorem~A.18 and Remark~A.21]{Schilling_2014})
implies that for an integrable martingale $M_t$, $t\geq 0$, and an integrable stopping time $\widetilde{\tau}$, 
it holds that $\bbE(M_{\widetilde{\tau}}) = \bbE(M_0)$.
As a consequence, the martingale property of the stochastic integral $\int_{0}^{t} \nabla u(X_s) \cdot {\rm d}W_s , t\geq 0$, see~\cite[Proposition~3.2.10]{KaratzasShreve_1988},
implies
\begin{equation}\label{eq:exp_stopped_stoch_integral}
\bbE_x\left( \int_{0}^{\tau_{D_\varepsilon}} \nabla u(X_t) \cdot {\rm d}W_t\right)=0
.
\end{equation}
and thus
\begin{equation*}
u(x) 
= 
\bbE_x(u(X_{\tau_{D_\varepsilon}}))
+
\frac{1}{2} \bbE_x\left(  
\int_{0}^{\tau_{D_\varepsilon}} f(X_t) {\rm d}t
\right ).
\end{equation*}
We seek to study the limit $\varepsilon \to 0$.
The solution $u$ is Lipschitz continuous on the closure $\overline{D}$ with Lipschitz constant $L_u>0$, 
which may be concluded by~\cite[Theorem~1.4]{CM_2011}.
There, the statement of~\cite[Theorem~1.4]{CM_2011} is applied to $v= u -g $ with right hand side $f-\Delta g$. 
The Lipschitz continuity of $u$ yields
\begin{equation*}
|\bbE_x(u(X_{\tau_D}) - u(X_{\tau_{D_\varepsilon}}))|
\leq 
L_u
\bbE_x(|X_{\tau_D} - X_{\tau_{D_\varepsilon}}|)
.
\end{equation*}
Since for any two integrable stopping times $\tilde\tau$, $\bar\tau$ that satisfiy
$\tilde\tau \geq \bar\tau$ it holds 
\begin{equation}\label{eq:mean_sq_cont_stop_time}
\bbE_x(|X_{\tilde\tau} - X_{\bar\tau}|^2)  = \bbE(\tilde\tau - \bar\tau), 
\end{equation}
cf.~\cite[Corollary~2.46 and Theorem~2.48]{MP_2010},
we obtain by Lemma~\ref{lem:expectation_tau-tau_eps}
\begin{equation*}
\bbE_x(|X_{\tau_D} - X_{\tau_{D_\varepsilon}}|)
\leq 
\sqrt{\bbE_x(|X_{\tau_D} - X_{\tau_{D_\varepsilon}}|^2)}
=
\sqrt{ \bbE_x(\tau_D - {\tau_{D_\varepsilon}}) }
\leq 
\sqrt{{\rm diam}(D) \varepsilon }
,
\end{equation*}
which implies that $ \bbE_x(u(X_{\tau_{D_\varepsilon}})) \to \bbE_x( g(X_{\tau_{D}}))$
as $\varepsilon \to 0$.
Similarly, also by Lemma~\ref{lem:expectation_tau-tau_eps}
\begin{equation*}
\left|\bbE_x\left( \int_{\tau_{D_\varepsilon}}^{\tau_D} f(X_t){\rm d}t\right)\right|
\leq 
\|f\|_{L^\infty(D)}\bbE_x(\tau_D - \tau_{D_\varepsilon})
\leq
\|f\|_{L^\infty(D)} {\rm diam}(D) \varepsilon  
\end{equation*}
and thus $\bbE_x(\int_0^{\tau_{D_\varepsilon}} f(X_t){\rm d}t )  \to \bbE_x(\int_0^{{\tau_{D}}} f(X_t){\rm d}t ) $
as $\varepsilon \to 0$.
\end{proof}

We define the discrete processes $\bar{X}_k$, $k\geq 0$, 
and $r_k$, $k\geq 1$,
which also tacitly depend on an initial starting point $x\in D$.
Let $\bar{X}_0=x$ and 
\begin{equation}\label{eq:def_Xbar_k} 
\bar{X}_k 
= 
\bar{X}_{k-1} 
+
Y_k r_k
\quad \text{with}\quad
r_k 
=
{\rm dist }(\bar{X}_{k-1},\partial D), 
\quad 
k\geq 1 .
\end{equation}
The sequence $Y_k$, $k\geq 1$, 
is indenpendent and identically distributed according
to $X_{\tau_{B(0,1)}}$ with respect to $\bbP_0$.
Note that $Y_k$ is uniformly distributed on the unit sphere, $k\geq 1$, see~\cite[Theorem~2]{Wendel_1980}.
This process has been introduced in~\cite{Muller_1956} and is commonly referred to as \emph{walk-on-the-sphere}.
The resulting random vector $\bar{X}_k$
depends on the initial point $\bar{X}_0=x$ and the random directions 
$Y_{k'}$, $k'=1,\ldots,k$.
Sometimes, we shall use the notation
\begin{equation}\label{eq:Xbar_explicit_notation}
\bar{X}_k = \bar{X}_k(x,Y_1,\ldots,Y_k), 
\quad k\geq 0,
\end{equation}
where this dependence is explicit.

The process $\bar{X}_k$, $k\geq 0$, is related to $X_t$, $t\geq 0$ as follows. 
Define $\bar{r}_1 := {\rm dist}(x,\partial D)$
and $\tau_1:= \tau_{B(x,r_1)}$.
For every $k\geq 2$, 
\begin{equation}\label{eq:def_tau_k}
\bar{r}_k := {\rm dist}(X_{\tau_{k-1}},\partial D)
\quad\text{and}\quad 
\tau_k :=
\inf\{t\geq 0 : X_{t+\tau_{k-1}} \notin B(X_{\sum_{j=1}^{k-1}\tau_{j}},\bar{r}_k) \}
.
\end{equation}
Note that $r_k$ and  $\bar{r}_k$ have the same distribution, $k\geq 1$.
Define 
\begin{equation}\label{eq:def_calI}
\calI(k) := \sum_{j=1}^{k}\tau_{j}
\quad\forall k\in\bbN.
\end{equation}
As noted above, under the measure $\bbP_{X_{\calI(k-1)}}$, $X_{\calI(k)}$ is equally distributed on the boundary of the ball $B(X_{\calI(k-1)},\bar{r}_k)$. 
Thus, by construction of the process $\bar{X}_k$, $k\geq 0$, 
it holds that 
$\bar{X}_k$ has the same distribution as $X_{\calI(k)}$, $k\geq 1$, 
see also Figure~\ref{fig:WOS_illustration}.
\begin{figure}[h]
  \centering \includegraphics[width=0.85\textwidth]{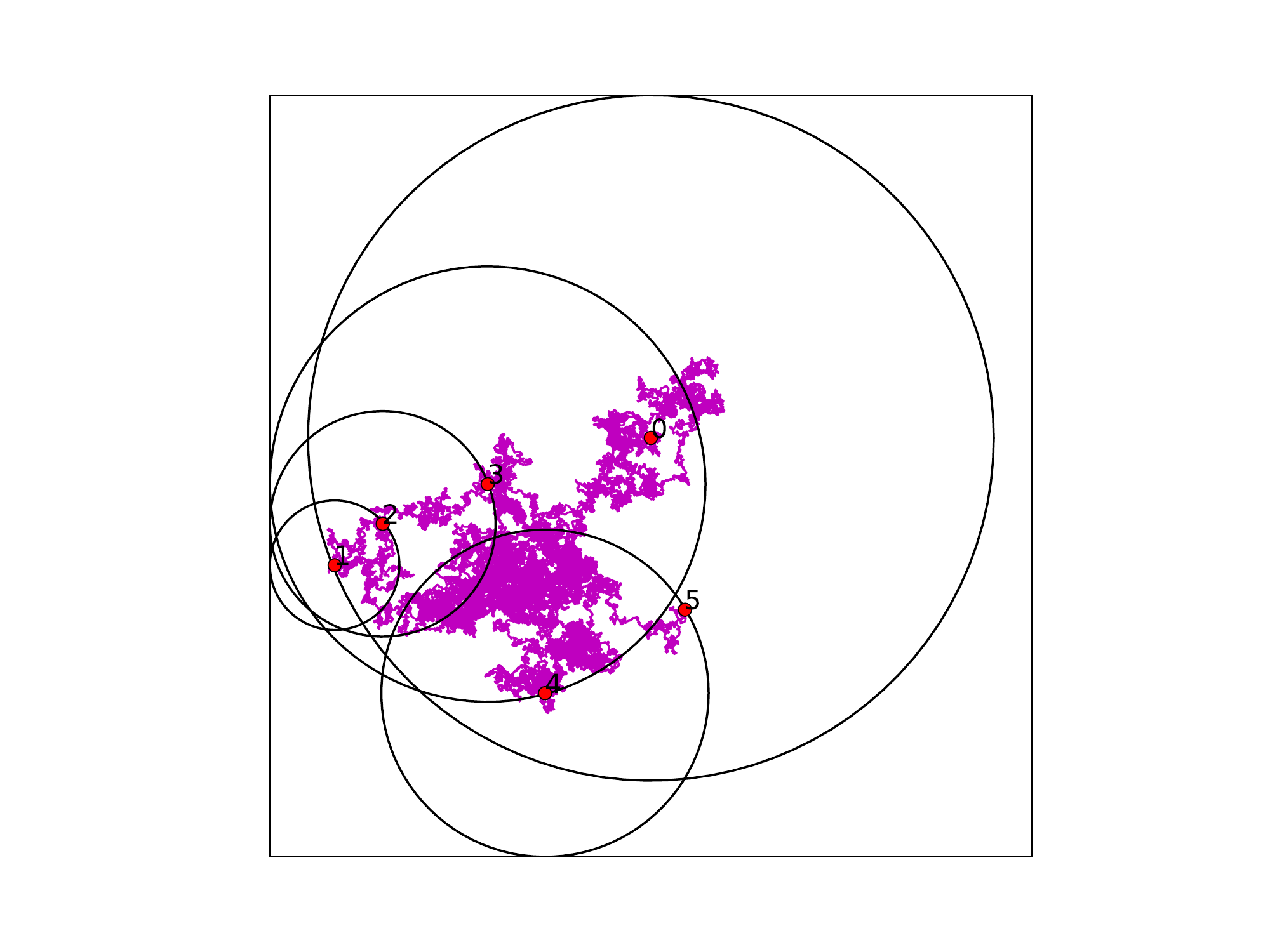}
  \caption{Illustration of the process $\bar{X}_k$, $k\geq 0$,
  in the case of a rectangular domain. One realization of $X_t$, $t\in [0,\calI(5)]$,
  and $\bar{X}_k$, $k=0,\ldots,5$, is plotted as numbered annotations.}
  \label{fig:WOS_illustration}
\end{figure}
Note that $\lim_{k \to \infty} X_{\calI(k)} = X_{\tau_D}$ $\bbP$-a.s., cf.~\cite[Theorem~3.6]{Muller_1956}.
The following lemma is a version of~\cite[Lemma~6.3]{KOS_2018} for $\alpha=2$ (in the notation of~\cite{KOS_2018}). 
We give a proof for the convenience of the reader.

\begin{lemma}\label{lem:sum_repr_Poisson_part}
Let the assumptions of Proposition~\ref{prop:Feynman--Kac_repr}
be satisfied.
There holds that
\begin{equation*}
u(x)
=
\bbE_x(g(X_{\tau_D}))
+
\bbE
\left( 
\sum_{k\geq 1}
r_{k}^2
K_1(f(\bar{X}_{k-1} + r_{k} \cdot))
\right),
\end{equation*}
where for any continuous function $v:B(0,1)\to \bbR$
\begin{equation}\label{eq:def_K1}
K_1(v) 
:=
\frac{1}{2}
\bbE_{0}\left( 
\int_{0}^{\tau_{B(0,1)}}
v(X_t){\rm d}t
\right)
.
\end{equation}
\end{lemma}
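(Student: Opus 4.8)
The plan is to start from the Feynman--Kac representation in Proposition~\ref{prop:Feynman--Kac_repr} and rewrite the volume term $\frac12\bbE_x\big(\int_0^{\tau_D} f(X_s)\,{\rm d}s\big)$ by decomposing the time interval $[0,\tau_D)$ along the walk-on-the-sphere stopping times $\calI(k)$ defined in~\eqref{eq:def_calI}. Since $\tau_D = \lim_{k\to\infty}\calI(k)$ $\bbP$-a.s. and the intervals $[\calI(k-1),\calI(k))$ are disjoint with union $[0,\tau_D)$ (taking $\calI(0):=0$), one has
\begin{equation*}
\int_0^{\tau_D} f(X_s)\,{\rm d}s = \sum_{k\geq 1}\int_{\calI(k-1)}^{\calI(k)} f(X_s)\,{\rm d}s
\end{equation*}
pointwise $\bbP$-a.s. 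Taking expectations and interchanging sum and expectation — justified by Tonelli applied to $|f(X_s)|\le\|f\|_{L^\infty(D)}$ and the fact that $\bbE_x(\tau_D)<\infty$ (which follows from Lemma~\ref{lem:tau_r_ball_exp} applied to a ball containing $D$, or from Lemma~\ref{lem:expectation_tau-tau_eps}) — gives
\begin{equation*}
\tfrac12\bbE_x\Big(\int_0^{\tau_D} f(X_s)\,{\rm d}s\Big) = \sum_{k\geq 1}\tfrac12\,\bbE_x\Big(\int_{\calI(k-1)}^{\calI(k)} f(X_s)\,{\rm d}s\Big).
\end{equation*}

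Next I would analyze the $k$-th summand using the strong Markov property at the stopping time $\calI(k-1)$. Conditioned on $\calF_{\calI(k-1)}$, the process $(X_{\calI(k-1)+t})_{t\geq 0}$ is a Brownian motion started at $X_{\calI(k-1)}$, and by definition~\eqref{eq:def_tau_k} the increment $\tau_k$ is the exit time of that fresh Brownian motion from the ball $B(X_{\calI(k-1)},\bar r_k)$ with $\bar r_k = {\rm dist}(X_{\calI(k-1)},\partial D)$ being $\calF_{\calI(k-1)}$-measurable. Hence
\begin{equation*}
\tfrac12\,\bbE_x\Big(\int_{\calI(k-1)}^{\calI(k)} f(X_s)\,{\rm d}s \;\Big|\;\calF_{\calI(k-1)}\Big) = \tfrac12\,\bbE_{X_{\calI(k-1)}}\Big(\int_0^{\tau_{B(X_{\calI(k-1)},\bar r_k)}} f(X_t)\,{\rm d}t\Big).
\end{equation*}
Applying the Brownian scaling $X_t \mapsto \bar r_k^{-1}(X_{\bar r_k^2 t} - X_{\calI(k-1)})$, which maps the exit time from $B(X_{\calI(k-1)},\bar r_k)$ to $\bar r_k^{-2}$ times the exit time from $B(0,1)$, this conditional expectation equals $\bar r_k^2\, K_1\big(f(X_{\calI(k-1)} + \bar r_k\,\cdot)\big)$ with $K_1$ as in~\eqref{eq:def_K1}. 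Taking full expectations and summing over $k$, then using that $(X_{\calI(k-1)},\bar r_k)$ has the same joint distribution as $(\bar X_{k-1}, r_k)$ — which was established just above the statement from the construction of $\bar X_k$ — replaces $X_{\calI(k-1)}$ by $\bar X_{k-1}$ and $\bar r_k$ by $r_k$, yielding the claimed series $\bbE\big(\sum_{k\geq 1} r_k^2\, K_1(f(\bar X_{k-1} + r_k\cdot))\big)$. Combining with the boundary term $\bbE_x(g(X_{\tau_D}))$ from Proposition~\ref{prop:Feynman--Kac_repr} finishes the proof.

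The main obstacle I anticipate is the rigorous justification of the interchanges of limit/sum and expectation, together with the careful bookkeeping of the Brownian scaling and the strong Markov property at the iterated stopping times $\calI(k-1)$: one must check that all the stopping times involved are a.s.\ finite and integrable, that the distributional identity $(X_{\calI(k-1)},\bar r_k)\stackrel{d}{=}(\bar X_{k-1},r_k)$ propagates correctly through the integral functional $K_1$, and that $\bbE\big(\sum_{k\ge1} r_k^2 |K_1(\cdots)|\big) \le \tfrac12\|f\|_{L^\infty(D)}\,\bbE_x(\tau_D) < \infty$ so that the series converges absolutely. The dominated/monotone convergence arguments are routine given $\|f\|_{L^\infty(D)}<\infty$ and $\bbE_x(\tau_D)<\infty$, but stating them cleanly is where the care is needed. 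The reference~\cite[Lemma~6.3]{KOS_2018} can be invoked for the structural identity if a shortcut is desired, but the self-contained argument above is the route I would write out.
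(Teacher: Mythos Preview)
Your proposal is correct and follows essentially the same route as the paper: decompose $[0,\tau_D)$ along the stopping times $\calI(k)$, apply the strong Markov property at $\calI(k-1)$, rescale Brownian motion to the unit ball to obtain the factor $\bar r_k^2 K_1(\cdot)$, and then replace $(X_{\calI(k-1)},\bar r_k)$ by $(\bar X_{k-1},r_k)$ via equality in distribution. The paper's argument is more terse about the interchange of sum and expectation; your explicit appeal to Tonelli via $\|f\|_{L^\infty(D)}$ and $\bbE_x(\tau_D)<\infty$ is a welcome addition.
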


\begin{proof}
This proof builds on the representation of $u$ from Proposition~\ref{prop:Feynman--Kac_repr}.
In particular, the term $\bbE_x(\int_0^{\tau_D} f(X_t) {\rm d}t )$
shall be represented by a sum of consecutive solutions of the Poisson equation on a ball.
 
The strong Markov property of the Brownian motion yields
\begin{equation}\label{eq:term1}
\bbE_x\left(
\int_{0}^{\tau_D}
f(X_t){\rm d}t
\right)
=
\bbE_x
\left(
\sum_{k\geq 1}
\int_{\tau_{k-1}}^{\tau_k}
f(X_t){\rm d}t
\right)
=
\bbE_x\left( 
\sum_{k\geq 1}
\bbE_{X_{\calI(k-1)}}\left( 
\int_{0}^{\tau_k}
f(X_t){\rm d}t
\right)
\right),
\end{equation}
where we recall the definition of the stopping times $\tau_k$, $k\geq 1$, from~\eqref{eq:def_tau_k}
and set $\tau_0=0$. 
Note that the \emph{scaling property} of Brownian motion states that 
$c X_{c^{-2} t}$ has the same distribution as $X_t$ 
for every $c>0$ and every $t\geq 0$, conditioned on $X_0 = x$, $x\in\bbR^d$. 
In conjunction with the strong Markov property for every $x'\in D$
and every $r>0$ such that $B(x',r)\subset D$,
\begin{equation*}
\bbE_{x'}\left( 
\int_{0}^{\tau_{B(x',r)}}
f(X_t){\rm d}t
\right)
=
\bbE_{0}\left( 
\int_{0}^{\tau_{B(0,r)}}
f(x'+X_t){\rm d}t
\right)
=
r^2
\bbE_{0}\left( 
\int_{0}^{\tau_{B(0,1)}}
f(x'+rX_t){\rm d}t
\right)
.
\end{equation*}
The assertion follows by inserting the previous equality into~\eqref{eq:term1}
with $x' = \bar{X}_{k-1}$ 
and $r=r_k$ and Proposition~\ref{prop:Feynman--Kac_repr}
using that $\bar{X}_{k-1}$ and $X_{\calI(k-1)}$ 
have the same distribution.
\end{proof}
Note that the functional $v\mapsto K_1(v)$ denotes the solution to the Poisson equation with homogeneous Dirichlet boundary conditions on the unit ball evaluated at the origin.
An explicit formula for $K_1$ is a classical result by Boggio~\cite{boggio_1905}, see also~\cite{GGS_2010}. 
Specifically by \cite[Lemma~2.27]{GGS_2010} for $d\geq 3$,
\begin{equation}\label{eq:boggios_formula}
K_1(v) 
= 
\frac{\Gamma(1+d/2)}{d \pi^{d/2}}\frac{1}{d-2}
\int_{|y|\leq 1}
v(y)
(|y|^{2-d} -1){\rm d}y
.
\end{equation}

For every $\varepsilon>0$, let us define the random index $N(\varepsilon)$
by
\begin{equation*}
N(\varepsilon)
:=
\inf\{k\geq 1 : {\rm dist}(\bar{X}_k,\partial D)\leq \varepsilon \}
.
\end{equation*}

\begin{proposition}\label{prop:error_of_WOS}
Let the assumptions of Proposition~\ref{prop:Feynman--Kac_repr}
be satisfied.
Let $\varepsilon>0$ be such that $D_\varepsilon \subset D$ is non empty.
Let $\bar{N}$ be an integer valued random variable that satisfies $\bar{N} \geq N(\varepsilon)$ $\bbP$-a.s.
It holds that
\begin{equation}\label{eq:err_WOS_Poisson_part}
\left|
\bbE_x\left(\int_{0}^{\tau_D}f(X_s){\rm d}s \right)
-
\bbE\left( \sum_{k=1}^{\bar{N}} r_{k}^2 K_1(f(\bar{X}_{k-1} + r_{k}\cdot)) \right)
\right|
\leq 
{\rm diam}(D) \|f\|_{L^\infty(D)}
\varepsilon
\end{equation}
and
\begin{equation}\label{eq:err_WOS_Laplace_part}
\left | 
\bbE_x(g(X_{\tau_D}))
-
\bbE_x(g(\bar{X}_{\bar{N}}))
\right|
\leq 
\frac{1}{2}\|\Delta g \|_{L^\infty(D)}
{\rm diam}(D)
\varepsilon.
\end{equation}
\end{proposition}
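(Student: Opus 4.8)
The plan is to compare the walk-on-the-sphere process stopped at the (possibly larger) random index $\bar N$ with the continuous process $X_t$ stopped at $\tau_D$, exploiting the distributional identification $\bar X_k \sim X_{\calI(k)}$ established above together with the almost-sure convergence $X_{\calI(k)}\to X_{\tau_D}$. First I would treat the Laplace part~\eqref{eq:err_WOS_Laplace_part}. The key observation is that $X_{\calI(\bar N)}$ lies on a sphere of radius $\bar r_{\bar N}\leq \varepsilon$ centred at $X_{\calI(\bar N -1)}$; more to the point, after the step indexed $\bar N$ the point $\bar X_{\bar N}$ is within distance $\varepsilon$ of $\partial D$ (for the minimal index $N(\varepsilon)$ by definition, and for any larger $\bar N$ because the walk remains inside $D$ and continues to take steps of size equal to the distance to the boundary, which only shrinks). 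Consequently $\bbE_x(|X_{\tau_D}-X_{\calI(\bar N)}|^2) = \bbE_x(\tau_D - \calI(\bar N))$ by~\eqref{eq:mean_sq_cont_stop_time}, and $\calI(\bar N) \geq \tau_{D_\varepsilon}$ since at time $\calI(\bar N)$ the process has already exited $D_\varepsilon$. Hence $\bbE_x(\tau_D-\calI(\bar N))\leq \bbE_x(\tau_D-\tau_{D_\varepsilon})\leq {\rm diam}(D)\varepsilon$ by Lemma~\ref{lem:expectation_tau-tau_eps}. However, $g$ is merely continuous on $\overline D$ with $C^2$ extension, so I cannot use a Lipschitz bound directly on $g$; instead I would bound $|\bbE_x(g(X_{\tau_D})) - \bbE_x(g(X_{\calI(\bar N)}))|$ by applying the harmonic-function representation, i.e.\ writing $\bbE_x(g(X_{\tau_D})) - \bbE_x(g(X_{\calI(\bar N)})) = \bbE_x\big(\tfrac12\int_{\calI(\bar N)}^{\tau_D}\Delta g(X_s)\,{\rm d}s\big)$ via Itô/Dynkin applied to the $C^2$ extension of $g$ (this is exactly the term that produces the $\Delta g$ factor), and then bounding by $\tfrac12\|\Delta g\|_{L^\infty(D)}\bbE_x(\tau_D-\calI(\bar N))\leq \tfrac12\|\Delta g\|_{L^\infty(D)}{\rm diam}(D)\varepsilon$.

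For the Poisson part~\eqref{eq:err_WOS_Poisson_part}, I would use Lemma~\ref{lem:sum_repr_Poisson_part}, which already gives $\bbE_x\big(\int_0^{\tau_D} f(X_s)\,{\rm d}s\big) = 2\,\bbE\big(\sum_{k\geq1} r_k^2 K_1(f(\bar X_{k-1}+r_k\cdot))\big)$ — wait, more precisely the identity there reads $\tfrac12\bbE_x(\int_0^{\tau_D} f\,{\rm d}s) = \bbE(\sum_{k\geq 1} r_k^2 K_1(\cdots))$, so the tail to control is $\bbE\big(\sum_{k>\bar N} r_k^2 K_1(f(\bar X_{k-1}+r_k\cdot))\big)$, which by definition of $K_1$ in~\eqref{eq:def_K1} and the scaling computation in the proof of Lemma~\ref{lem:sum_repr_Poisson_part} equals $\tfrac12\bbE_x\big(\int_{\calI(\bar N)}^{\tau_D} f(X_s)\,{\rm d}s\big)$. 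I would then bound this by $\tfrac12\|f\|_{L^\infty(D)}\bbE_x(\tau_D - \calI(\bar N))$ — and here I realise the stated bound in~\eqref{eq:err_WOS_Poisson_part} has no factor $\tfrac12$, so the cleaner route is to observe directly that $|\bbE_x(\int_0^{\tau_D}f\,{\rm d}s) - 2\bbE(\sum_{k=1}^{\bar N}\cdots)| = |\bbE_x(\int_{\calI(\bar N)}^{\tau_D} f(X_s)\,{\rm d}s)| \leq \|f\|_{L^\infty(D)}\bbE_x(\tau_D-\calI(\bar N)) \leq \|f\|_{L^\infty(D)}{\rm diam}(D)\varepsilon$, using once more $\calI(\bar N)\geq\tau_{D_\varepsilon}$ and Lemma~\ref{lem:expectation_tau-tau_eps}. (One must check the constant matches the normalisation of $K_1$; I would reconcile this against~\eqref{eq:def_K1}.)

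The main obstacle — and the step requiring the most care — is the precise justification that $\calI(\bar N) \geq \tau_{D_\varepsilon}$ almost surely for \emph{any} $\bar N \geq N(\varepsilon)$, not just for $\bar N = N(\varepsilon)$. For $N(\varepsilon)$ itself it is immediate: at the first index where $\bar X_k$ enters the $\varepsilon$-collar of $\partial D$, the corresponding continuous-process time $\calI(N(\varepsilon))$ is a point where $X$ has left $D_\varepsilon$; but for larger $\bar N$ one needs that the walk, once in the collar, stays in $D$ and its distances to $\partial D$ do not increase back above $\varepsilon$ — this follows because each step $r_k = {\rm dist}(\bar X_{k-1},\partial D)$ and $\bar X_k$ stays in the open ball $B(\bar X_{k-1},r_k)\subset D$, so the walk never leaves $D$ and $\calI(\bar N)\leq\tau_D$ always, while $\calI(\bar N)\geq\calI(N(\varepsilon))\geq\tau_{D_\varepsilon}$ by monotonicity of $\calI$. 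A secondary technical point is invoking Itô/Dynkin for the $C^2$ extension of $g$ across the random times $\calI(\bar N)$ and $\tau_D$, using the optional stopping argument already developed in the proof of Proposition~\ref{prop:Feynman--Kac_repr} together with integrability of $\tau_D$ (finite since $D$ is bounded, cf.\ Lemma~\ref{lem:tau_r_ball_exp} applied to a ball containing $D$).
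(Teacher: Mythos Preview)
Your proposal is correct and follows essentially the same route as the paper: for both parts you identify the truncation error with $\bbE_x\big(\int_{\calI(\bar N)}^{\tau_D}(\cdot)\,{\rm d}s\big)$, use $\calI(\bar N)\geq \calI(N(\varepsilon))\geq \tau_{D_\varepsilon}$ together with Lemma~\ref{lem:expectation_tau-tau_eps}, and for the Laplace part apply It\^o to the $C^2$ extension of $g$ so that the stochastic integral vanishes in expectation and only the $\Delta g$ term survives. Your extra care in justifying $\calI(\bar N)\geq\tau_{D_\varepsilon}$ via monotonicity of $\calI$ (rather than any claim that the walk stays in the collar) is the right observation, and your remark about reconciling the factor in front of $K_1$ with~\eqref{eq:def_K1} is well taken---the paper's own proof simply asserts the identity $\bbE_x(\int_0^{\tau_D}f\,{\rm d}s)-\bbE(\sum_{k=1}^{\bar N}r_k^2 K_1(\cdots))=\bbE_x(\int_{\calI(\bar N)}^{\tau_D}f\,{\rm d}s)$ without commenting on the normalisation.
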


\begin{proof}
 As a consequence of Lemma~\ref{lem:sum_repr_Poisson_part}
 and its proof, see~\eqref{eq:term1},
 \begin{equation*}
 \bbE_x\left(\int_{0}^{\tau_D} f(X_s){\rm d}s
 \right)
 -
 \bbE\left(
 \sum_{k=1}^{ \bar{N}} r_{k}^2 K_1(f(\bar{X}_{k-1} + r_{k}\cdot) )
 \right)
 = 
  \bbE_x\left(\int_{\calI(\bar{N})}^{\tau_D} f(X_s){\rm d}s\right)
 \end{equation*}
The assumption $ \bar{N} \geq N(\varepsilon)$, $\bbP$-a.s.,
implies that $\calI( \bar{N}) \geq \tau_{D_\varepsilon}$, $\bbP$-a.s.,
where we recall the defintion of $\calI$ in~\eqref{eq:def_calI}.
Thus, 
\begin{equation*}
\left|
\bbE_x\left(\int_{0}^{\tau_D} f(X_s){\rm d}s
\right)
-
\bbE\left(
\sum_{k=1}^{ \bar{N}} r_{k}^2 K_1(f(\bar{X}_{k-1} + r_{k}\cdot) ) 
\right)
\right |
\leq 
\|f\|_{L^\infty(D)}
\bbE_x(\tau_D - \tau_{D_\varepsilon})
.
\end{equation*}
The first assertion~\eqref{eq:err_WOS_Poisson_part} follows by Lemma~\ref{lem:expectation_tau-tau_eps}.
To show the second assertion~\eqref{eq:err_WOS_Laplace_part},
we may apply Ito's lemma (see for example~\cite[Theorem~17.8]{Schilling_2014}), which implies
\begin{equation*}
g(X_{\tau_D}) - g(X_{\calI(\bar{N})})
=
\int_{\calI(\bar{N})}^{\tau_D}
\nabla g(X_s) \cdot {\rm d} W_s
+
\frac{1}{2}
\int_{\calI(\bar{N})}^{\tau_D}
\Delta g(X_s) {\rm d}s
.
\end{equation*}
By~\eqref{eq:exp_stopped_stoch_integral}, 
\begin{equation*}
\bbE_x\left(
\int_{\calI(\bar{N})}^{\tau_D}
\nabla g(X_s) \cdot {\rm d} W_s
\right)
=0.
\end{equation*}
The assertion follows by 
\begin{equation*}
\left|\bbE_x\left(
\int_{\calI(\bar{N})}^{\tau_D}
\Delta g(X_s) {\rm d}s
\right)\right|
\leq 
\|\Delta g\|_{L^{\infty}((D)}
\bbE_x(\tau_D - \tau_{D_\varepsilon})
\leq 
\|\Delta g\|_{L^{\infty}(D)}
{\rm diam}(D)\varepsilon
,
\end{equation*}
which is consequence of Lemma~\ref{lem:expectation_tau-tau_eps}.
\end{proof}

The statement of the following lemma is in principle known. We provide a proof for the convenience of the reader.

\begin{lemma}\label{lem:exp_of_N}
Let $D\subset \bbR^d$ be a bounded, convex domain.
For any $\varepsilon>0$ such that $D_\varepsilon$ 
is non-empty,
it holds that
\begin{equation*}
\bbE\left( \sup_{x\in D} N(\varepsilon ) \right)
\leq 
({\rm diam}(D))^2 \varepsilon^{-2}
.
\end{equation*}
\end{lemma}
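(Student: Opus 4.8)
The plan is to bound the number of walk-on-the-sphere steps needed to enter the $\varepsilon$-collar of $\partial D$ by exhibiting a supermartingale and applying optional stopping. The key observation is that one step of the walk moves $\bar X_{k-1}$ to a uniformly random point on the sphere of radius $r_k = {\rm dist}(\bar X_{k-1},\partial D)$ centred at $\bar X_{k-1}$, and $\mathbb{E}_0(|Y_k|^2)=1$. Hence, writing $\mathcal{F}_k$ for the natural filtration of $\bar X_0,\dots,\bar X_k$ and using that $\bar X_k$ has the same distribution as $X_{\mathcal{I}(k)}$ (established earlier), the quantity $\mathbb{E}(\tau_D \mid \mathcal{F}_k)$ admits, via Lemma~\ref{lem:tau_r_ball_exp} and the strong Markov property, the decomposition $\mathbb{E}(\tau_D\mid\mathcal{F}_k) = \mathcal{I}(k)\text{-part} + \mathbb{E}_{\bar X_k}(\tau_D)$; but a cleaner route is to track the process $M_k := \mathbb{E}_x(\tau_D) - \mathbb{E}(\mathcal{I}(k\wedge N(\varepsilon)))$ together with the elementary identity $\mathbb{E}(\tau_k \mid \mathcal{F}_{k-1}) = \mathbb{E}_{\bar X_{k-1}}(\tau_{B(\bar X_{k-1},\bar r_k)}) = \bar r_k^2/d$ from Lemma~\ref{lem:tau_r_ball_exp} (with $|x|=0$ at the centre).

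Concretely, I would first fix $x\in D$ and set $a_k := \mathbb{E}(r_{k\wedge N(\varepsilon)}^2)$, noting that on the event $\{k\le N(\varepsilon)\}$ one has $r_k = {\rm dist}(\bar X_{k-1},\partial D) > \varepsilon$, so each such step contributes at least $\varepsilon^2/d$ to the accumulated expected exit time. Next, using that $\sum_{k\ge 1}\tau_k = \tau_D$ $\mathbb{P}$-a.s.\ (since $\mathcal{I}(k)\uparrow\tau_D$) and $\mathbb{E}_x(\tau_D)\le \mathbb{E}_x(\tau_{B(x,{\rm diam}(D))}) \le ({\rm diam}(D))^2/d$ by Lemma~\ref{lem:tau_r_ball_exp}, I would write
\begin{equation*}
\frac{({\rm diam}(D))^2}{d}
\ge
\mathbb{E}_x(\tau_D)
=
\mathbb{E}\Big(\sum_{k\ge 1}\tau_k\Big)
\ge
\mathbb{E}\Big(\sum_{k=1}^{N(\varepsilon)}\tau_k\Big)
=
\sum_{k\ge 1}\mathbb{E}\big(\tau_k\mathbbm{1}_{\{k\le N(\varepsilon)\}}\big).
\end{equation*}
For each $k$, conditioning on $\mathcal{F}_{k-1}$ and using that $\{k\le N(\varepsilon)\}=\{N(\varepsilon)\ge k\}$ is $\mathcal{F}_{k-1}$-measurable, $\mathbb{E}(\tau_k\mathbbm{1}_{\{k\le N(\varepsilon)\}}\mid\mathcal{F}_{k-1}) = \mathbbm{1}_{\{k\le N(\varepsilon)\}}\,\bar r_k^2/d \ge \mathbbm{1}_{\{k\le N(\varepsilon)\}}\,\varepsilon^2/d$, since on that event $\bar r_k > \varepsilon$. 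Taking expectations and summing gives $({\rm diam}(D))^2/d \ge (\varepsilon^2/d)\sum_{k\ge1}\mathbb{P}(N(\varepsilon)\ge k) = (\varepsilon^2/d)\,\mathbb{E}(N(\varepsilon))$, i.e.\ $\mathbb{E}(N(\varepsilon)) \le ({\rm diam}(D))^2\varepsilon^{-2}$; the factor $d$ cancels.

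The main obstacle is making the supremum over $x\in D$ rigorous: the bound above is uniform in the starting point $x$ (the estimate $\mathbb{E}_x(\tau_D)\le({\rm diam}(D))^2/d$ holds for all $x\in D$ because $D\subset B(x,{\rm diam}(D))$), so one must argue that $\mathbb{E}(\sup_{x\in D}N(\varepsilon)) \le \sup_{x\in D}\mathbb{E}_x(N(\varepsilon))$ is justified — here the cleanest formulation is to couple all starting points on one probability space through the shared sequence $(Y_k)_{k\ge1}$ and observe the monotonicity/measurability needed, or simply to interpret the left-hand side as $\sup_{x\in D}\mathbb{E}_x(N(\varepsilon))$ as the statement surely intends. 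A secondary technical point is verifying the interchange of expectation and infinite sum (monotone convergence, all terms nonnegative) and confirming $\mathcal{I}(k)\uparrow\tau_D$ a.s., which was already recorded in the excerpt via \cite[Theorem~3.6]{Muller_1956}.
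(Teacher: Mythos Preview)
Your core computation for a fixed starting point $x$ is essentially the paper's: bound $\bbE_x(\tau_D)$ above by $({\rm diam}(D))^2/d$ via Lemma~\ref{lem:tau_r_ball_exp}, use $\bbE(\tau_k\mid\calF_{k-1})=\bar r_k^2/d\ge\varepsilon^2/d$ on $\{k\le N(\varepsilon)\}$, sum, and let the factor $d$ cancel.

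The genuine gap is exactly where you flag it: passing from $\sup_x\bbE_x(N(\varepsilon))$ to $\bbE(\sup_x N(\varepsilon))$. The inequality you write, $\bbE(\sup_{x\in D}N(\varepsilon))\le\sup_{x\in D}\bbE_x(N(\varepsilon))$, points the wrong way (in general $\bbE\sup\ge\sup\bbE$), and reinterpreting the left-hand side is not an option here: the paper genuinely uses $\bar N(\varepsilon):=\sup_{x\in D}N(\varepsilon)$ as a random variable in the proofs of Propositions~\ref{prop:NN_approx_Laplace_part} and~\ref{prop:NN_approx_Poisson_part}. Coupling through the shared discrete directions $(Y_k)_{k\ge1}$, as you suggest, does not by itself yield a pathwise monotonicity in $x$ that would let you pull the supremum through.

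The paper addresses this point differently. It couples all starting points through the \emph{continuous} Brownian motion, writing $X_t=x+W_t$ with one common $W$, and uses the pathwise inclusion $\{t>0:W_t\notin B(0,{\rm diam}(D))\}\subset\{t>0:x+W_t\notin D\}$, valid for every $x\in D$. This gives the almost-sure bound $\sup_{x\in D}\tau_D\le\inf\{t>0:W_t\notin B(0,{\rm diam}(D))\}$, whence $\bbE(\sup_{x\in D}\tau_D)\le({\rm diam}(D))^2/d$ by Lemma~\ref{lem:tau_r_ball_exp}. The paper then runs the same indicator-to-$r_k^2$ comparison as you, but keeps $\sup_x$ inside the expectation throughout and closes with this pathwise control of $\sup_x\tau_D$. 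The missing ingredient in your proposal is precisely this continuous-time coupling that bounds $\sup_{x\in D}\tau_D$ as a single random variable independent of $x$.
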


\begin{proof}
We recall that for $x\in D$ such that ${\rm dist}(x,\partial D)<\varepsilon$, 
$N(\varepsilon ) =1$.
Thus, by the Markov property and Lemma~\ref{lem:tau_r_ball_exp}, for every $k\geq 1$,
\begin{equation*}
\bbE_{X_{\calI(k-1)}}(\tau_{k})
=
\frac{r_k^2}{d}
\quad \bbP\text{-a.s.}, 
\end{equation*}
where $\tau_k$ is defined in~\eqref{eq:def_tau_k}.
Note that since $D_\varepsilon$ is not empty,
it follows that $\varepsilon\leq {\rm diam}(D)/2$.
Since $r_k\geq \varepsilon$ for every $k\leq N(\varepsilon)$ and $x\in D_\varepsilon$ $\bbP$-a.s.,
\begin{equation*}
\begin{aligned}
\bbE\left( \sup_{x\in D} N(\varepsilon ) \right)
\leq
\bbE\left( 
\sup_{x\in D_\varepsilon}
\sum_{k\geq 1}
\mathbbm{1}_{\{r_{k}\geq \varepsilon\}}
\right)
&\leq 
d
\varepsilon^{-2}
\bbE\left( 
\sup_{x\in D}
\sum_{k\geq 1}
\bbE_{X_{\calI(k-1)}}(\tau_{k})
\mathbbm{1}_{\{r_k\geq \varepsilon\}}
\right)
\\
&\leq 
d
\varepsilon^{-2}
\bbE\left(
\sup_{x\in D}
\tau_D
\right)
.
\end{aligned}
\end{equation*}
Since $\{t > 0 : W_t \notin B(0,{\rm diam}(D)) \} \subset \{t>0: x+W_t \notin D\}$
for every $x\in D$ (using convexity of $D$), it holds that 
$\sup_{x\in D}\tau_D \leq  \inf\{t>0 :W_t \notin B(0,{\rm diam}(D)) \}$.
By Lemma~\ref{lem:tau_r_ball_exp}, 
\begin{equation*}
\bbE\left(
\sup_{x\in D}
\tau_D
\right) 
\leq 
\frac{({\rm diam}(D))^2}{d}, 
\end{equation*} 
which concludes the proof of this lemma.
\end{proof}

\section{Approximation by deep neural networks without curse of dimension}
\label{sec:PDEs_DNNs}

Deep neural networks allow to accommodate composition of mappings in their structure or architecture.
The repeated occurrence of linear maps (here expectations such as $\bbE_x(\cdot)$) and compositions of maps in the Feynman--Kac representation of a solution to an elliptic or also to a parabolic PDE
was found to suit the architecture of deep neural networks in~\cite[Proposition~3.4]{GHJv18_786}.
In this section the applicability of DNNs shall be extended to PDEs with boundary conditions.
The main obstruction in the analysis is the stopping time $\tau_D$, 
which also depends on $x\in D$; the point where the process $X_t$ is started.

Recall that we aim to approximate solutions to the prototypical elliptic PDE 
\begin{equation} \tag{{\ref{eq:elliptic_PDE}}}
 -\Delta u =f \quad\text{in } D
 \quad\text{and}\quad 
 u\big\vert_{\partial D }=g 
\end{equation}
by DNNs with ReLU activation function. 
The basics on stochastic sampling methods introduced in Section~\ref{sec:WOS}
shall serve as tools in the proofs of this section. 
The following theorem constitutes our main result. 

\begin{theorem}\label{thm:main_result}
        Let the assumptions of Proposition~\ref{prop:Feynman--Kac_repr}
        be satisfied.
        Suppose that
        for every $\delta, \delta_f, \delta_g\in (0,1)$, there exist ReLU DNNs $\phi_{{\rm dist},\delta}$, $\phi_{f,\delta_f}$, and $\phi_{g,\delta_g}$ such that
\begin{equation}
\sup_{x\in D}|{\rm dist}(x,\partial D) -  \phi_{{\rm dist},\delta}(x) |\leq \delta
\end{equation}
\begin{equation}\label{eq:RHS_NN}
\| f -  \phi_{f,\delta_f}\|_{L^\infty(D)}\leq \delta_f
\end{equation}
\begin{equation}\label{eq:BCs_NN}
\|g -  \phi_{g,\delta_g}\|_{L^\infty(D)}\leq \delta_g
\end{equation}
and ${\rm size}(\phi_{{\rm dist}, \delta}) = \calO(d^a \lceil\log(\delta^{-1})\rceil^b)$, 
${\rm size}(\phi_{f, \delta_f}) = \calO(d^a\delta_f^{-b})$,
and ${\rm size}(\phi_{g, \delta_g}) = \calO(d^a\delta_g^{-b})$
for some $a,b\in (1,\infty)$ which do not depend on $d$.
Let additionally $f$ and $g$ be Lipschitz continuous on $\overline{D}$.
	For every $\bar{\delta}\in (0,1)$, there exists a ReLU DNN $\phi_{u,\bar{\delta}}$
	such that
	\begin{equation*}
	\| u - \phi_{u,\bar{\delta}}\|_{L^2(D)}
	\leq  
	\bar{\delta}
	\end{equation*}
	with ${\rm size}(\phi_{u,\bar{\delta}}) =\calO( d^a\bar{\delta}^{-14-6b} (1+|D|^{14+6b}) )$. 
	The tacit constants in the Landau symbols depend on
$\|f\|_{L^\infty(D)}$, $\|\Delta g\|_{L^\infty(D)}$, $\|g\|_{L^\infty(D)}$, the Lipschitz constants of $f$ and $g$, and on ${\rm diam}(D)$.
\end{theorem}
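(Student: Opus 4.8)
The plan is to build the DNN $\phi_{u,\bar\delta}$ by discretizing the walk-on-the-sphere representation from Lemma~\ref{lem:sum_repr_Poisson_part} and truncating it using Proposition~\ref{prop:error_of_WOS}, then replacing every ingredient (the distance function, $f$, $g$, the scalar products, and the Monte Carlo expectation) by a DNN and tracking the accumulated size. First I would fix a truncation level $\bar N \geq N(\varepsilon)$; the natural choice is the deterministic bound $\bar N = \lceil (\mathrm{diam}(D))^2 \varepsilon^{-2}\rceil$, which by Lemma~\ref{lem:exp_of_N} dominates $\sup_{x\in D}N(\varepsilon)$ in expectation --- but since we need an almost sure bound on the number of walk steps to get a finite network, I would instead run exactly $\bar N$ steps and observe that once the walk enters $D_\varepsilon^c$ the increments $r_k$ become small (bounded by the approximation error of $\phi_{\mathrm{dist},\delta}$), so the error incurred by ``overshooting'' is controlled. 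Applying Proposition~\ref{prop:error_of_WOS} with this $\bar N$ gives a bound of order $\mathrm{diam}(D)(\|f\|_{L^\infty(D)}+\tfrac12\|\Delta g\|_{L^\infty(D)})\varepsilon$ for the error between $u(x)$ and $\mathbb{E}(g(\bar X_{\bar N})) + \mathbb{E}(\sum_{k=1}^{\bar N} r_k^2 K_1(f(\bar X_{k-1}+r_k\cdot)))$.

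Next I would discretize each $K_1(f(\bar X_{k-1}+r_k\cdot))$ term. Using Boggio's formula \eqref{eq:boggios_formula}, $K_1$ applied to a function is a weighted integral over the unit ball against the explicit kernel $(|y|^{2-d}-1)$; I would approximate this integral by a Monte Carlo quadrature with $M$ i.i.d.\ uniform samples on $B(0,1)$ (or importance-sampled against the kernel), obtaining a random variable of the form $\tfrac1M\sum_{j} c_j f(\bar X_{k-1}+r_k Z_j)$. Likewise the outer expectations over the random directions $Y_1,\dots,Y_{\bar N}$ and over the quadrature nodes are replaced by empirical averages over $n$ independent realizations. The key probabilistic point, exactly as in \cite[Proposition~3.4]{GHJv18_786}, is that the resulting empirical average has root-mean-square error decaying like $n^{-1/2}$ times the variance, so there exists at least one realization of all the random samples for which the deterministic function it defines is within the target accuracy in $L^2(D)$; one must check the variance is bounded polynomially in $d$, $|D|$, and the data norms, which follows because $r_k \leq \mathrm{diam}(D)$, $|K_1(v)|\lesssim \|v\|_\infty$ uniformly in $d$ (the constant in Boggio's formula times the kernel integral is dimension-bounded), and the number of summands is $\bar N \lesssim \varepsilon^{-2}$.

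Then comes the emulation step: for a fixed realization of the samples, the function $x\mapsto \bar X_k(x,Y_1,\dots,Y_k)$ is built from $x$ by $k$ affine updates, each requiring one evaluation of $\phi_{\mathrm{dist},\delta}$ and $d$ multiplications $\tilde\times_\delta$ (Lemma~\ref{lem:NN_prod_scalars}) to form $Y_k r_k$; composing $\bar N$ such blocks via Lemma~\ref{lem:composition_NNs} and assembling the full sum via Lemma~\ref{lem:additions_NNs} gives a DNN whose size is roughly $\bar N \cdot n \cdot M \cdot (\mathrm{size}(\phi_{\mathrm{dist},\delta}) + d\,\mathrm{size}(\tilde\times_\delta) + \mathrm{size}(\phi_{f,\delta_f}) + \mathrm{size}(\phi_{g,\delta_g}))$, up to the composition blow-up factors. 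I would propagate errors through the composition: an error $\delta$ in $\phi_{\mathrm{dist},\delta}$ perturbs $\bar X_k$ by at most $C^k\delta$ for some $C$ depending on $\mathrm{diam}(D)$ and the Lipschitz constant of $\mathrm{dist}$ (which is $1$), so choosing $\delta$ exponentially small in $\bar N$, i.e.\ $\log(\delta^{-1}) \sim \bar N \sim \varepsilon^{-2}$, keeps the propagated error under control while only costing a polynomial factor in network size since $\mathrm{size}(\phi_{\mathrm{dist},\delta})$ depends polylogarithmically on $\delta^{-1}$. Finally I would set $\varepsilon$, $n$, $M$, $\delta$, $\delta_f$, $\delta_g$ as explicit powers of $\bar\delta$ (balancing the WOS truncation error $\sim\varepsilon$, the Monte Carlo error $\sim n^{-1/2}$, the quadrature error $\sim M^{-1/2}$, and the emulation errors) and collect the size bound, which after optimization takes the stated form $\calO(d^a\bar\delta^{-14-6b}(1+|D|^{14+6b}))$; the exponents $14+6b$ arise from multiplying together the polynomial dependencies of $\bar N$, $n$, $M$ on $\bar\delta^{-1}$ with the $\delta_f^{-b}$, $\delta_g^{-b}$ costs of the data networks.

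The main obstacle I expect is the error propagation through the $\bar N$-fold composition defining $\bar X_{\bar N}$: a naive Lipschitz estimate gives a constant $C^{\bar N}$ that is doubly exponential in $\bar\delta^{-1}$, which is fatal. The resolution --- and the technically delicate part --- is to exploit that the walk-on-the-sphere map is a \emph{contraction in expectation} toward the boundary (the step sizes $r_k$ shrink geometrically in probability as in the proof of Lemma~\ref{lem:exp_of_N}), so that errors do not compound multiplicatively in the relevant ($L^2$-in-$x$, averaged-over-samples) norm; making this precise, so that the required internal accuracy $\delta$ is only inverse-polynomial (hence $\log\delta^{-1}$ only polylogarithmic) rather than inverse-exponential in $\bar\delta^{-1}$, is where the bulk of the work lies. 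A secondary difficulty is verifying that all dimension-dependent constants --- notably the Boggio kernel normalization $\frac{\Gamma(1+d/2)}{d\pi^{d/2}}\frac{1}{d-2}$ against $\int_{|y|\le1}(|y|^{2-d}-1)\,\mathrm{d}y$ --- combine to something bounded uniformly in $d$, so that the only dimension dependence in the final size is the $d^a$ inherited from the data networks.
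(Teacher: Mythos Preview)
Your overall architecture --- WOS representation, truncation via Proposition~\ref{prop:error_of_WOS}, Monte Carlo for both the outer expectation and the $K_1$ integral (Boggio kernel as importance-sampling density), then selection of a good realization as in \cite{GHJv18_786} --- is exactly the paper's route. Two points, however, are handled differently in the paper and your treatment of them contains gaps.

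\medskip
\textbf{The number of steps $\bar N$.} A deterministic $\bar N=\lceil \mathrm{diam}(D)^2\varepsilon^{-2}\rceil$ does \emph{not} satisfy $\bar N\ge N(\varepsilon)$ almost surely; Lemma~\ref{lem:exp_of_N} only gives $\bbE[\sup_x N(\varepsilon)]\le \bar N$, so Proposition~\ref{prop:error_of_WOS} is not directly applicable and your ``overshooting'' remark addresses the wrong direction (the problematic event is $N(\varepsilon)>\bar N$, not the reverse). The paper instead takes the random $\bar N(\varepsilon)=\sup_{x\in D}N(\varepsilon)$, so the hypothesis of Proposition~\ref{prop:error_of_WOS} holds trivially, and then --- this is the key device --- adds the term $\varepsilon^2\bigl|\,\bbE(\sqrt{\bar N(\varepsilon)})-E_M(\sqrt{\bar N(\varepsilon)})\,\bigr|^2$ to the quantity whose expectation is bounded. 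When one then picks a realization on which the whole expression is below its mean, one automatically obtains a \emph{deterministic} bound $\sum_i \bar N_i\le C M^2(M+|D|)$ on the realized step counts (see \eqref{eq:estimates_0} and \eqref{eq:est_sum_Ni}). These realized $\bar N_i$ are what govern both the composition depth and the choice of $\delta$ below.

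\medskip
\textbf{Error propagation through the composition.} Your final paragraph misdiagnoses the difficulty. The one-step map $\bar X_{k-1}\mapsto \bar X_{k-1}+Y_k\,\mathrm{dist}(\bar X_{k-1},\partial D)$ has Lipschitz constant exactly $2$ (since $|Y_k|=1$ and $\mathrm{dist}(\cdot,\partial D)$ is $1$-Lipschitz), regardless of how small $r_k$ is, so the errors \emph{do} compound as $2^k\delta$; there is no contraction to exploit, and the paper does not attempt one. Rather, the paper simply accepts $|\bar X_k-\widetilde X_k|\le 2^k\delta$ and chooses $\delta\le 2^{-C M^2(M+|D|)}\cdot\bar\delta$, which kills the $2^{\bar N_i}$ factor. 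This is affordable precisely because of the assumed polylogarithmic dependence $\mathrm{size}(\phi_{\mathrm{dist},\delta})=\calO(d^a\lceil\log\delta^{-1}\rceil^b)$: one gets $\log\delta^{-1}\sim M^3\sim\bar\delta^{-6}$, hence $\mathrm{size}(\phi_{\mathrm{dist},\delta})=\calO(d^a\bar\delta^{-6b})$, which is polynomial. You actually stated this correctly one paragraph earlier (``choosing $\delta$ exponentially small in $\bar N$\ldots only costing a polynomial factor''); the final ``obstacle'' paragraph contradicts that and should be dropped. The quantity $C^{\bar N}$ with $\bar N$ polynomial in $\bar\delta^{-1}$ is singly exponential, not doubly, and is absorbed by the polylog size assumption --- that is the whole point of assuming $\mathrm{size}(\phi_{\mathrm{dist},\delta})=\calO(d^a\lceil\log\delta^{-1}\rceil^b)$ rather than $\calO(d^a\delta^{-b})$.
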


The proof of Theorem~\ref{thm:main_result}
will be postponed to the end of this section after two intermediate propositions have been proven.

\begin{proposition}\label{prop:NN_approx_Laplace_part}
	Suppose that $g\in C^2(\overline{D})$, $x\mapsto {\rm dist}(x,\partial D)$
	can be realized by a ReLU DNN $\phi_{\rm dist}$, 
	and for any $\delta_g\in (0,1)$
	there exists a ReLU DNN $\phi_{g,\delta_g}$ 
	such that
	\begin{equation*}
	\| g - \phi_{g,\delta_g}\|_{L^\infty(D)}
	\leq
	\delta_g
	.
	\end{equation*}
	For every $\bar{\delta}\in (0,1)$, there exists a ReLU DNN $\phi_{1,\bar{\delta}}$
	such that
	\begin{equation*}
	\sqrt{\int_D |\bbE_{x}(g(X_{\tau_D})) - \phi_{1,\bar{\delta}}(x)|^2{\rm d}x}
	\leq  
	\bar{\delta}
	.
	\end{equation*}
	Furthermore, there exist $M= \lceil  c\bar{\delta}^{-2}(1+|D|)\rceil $, $\bar{N}_i$, $i=1,\ldots,M$, and unit vectors $Y_{i,k}$, $k=1,\ldots,\bar{N}_i$, $i=1,\ldots,M$, such that for every $x \in D$, 
	\begin{equation*}
	\phi_{1,\bar{\delta}}(x)
	=
	\frac{1}{M}\sum_{i=1}^M(\phi_{g,\delta_g}(\bar{X}_{\bar{N}_i}(x,Y_{i,1},\ldots,Y_{i,\bar{N}_i}))).
	\end{equation*}
	The accuracy $\delta_g$ of the ReLU DNN $\phi_{g,\delta_g}$
	satisfies $\delta_g = c' \bar{\delta}/(1 + \sqrt{|D|})$.
	The numbers $\bar{N}_i$, $i=1,\ldots,M$, satisfy that
	\begin{equation}\label{eq:est_N_i_Laplace_part}
	\sum_{i=1}^{M}
	\bar{N}_i
	\leq 
	C M^2 ( M  + |D|)
	.
	\end{equation}
The constants $c,c',C>0$ only depend on $\|\Delta g \|_{L^\infty(D)}$ and on ${\rm diam}(D)$.
\end{proposition}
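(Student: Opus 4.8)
The plan is to approximate the expectation $\bbE_x(g(X_{\tau_D}))$ in three successive steps: first replace $X_{\tau_D}$ by the walk-on-the-sphere iterate $\bar X_{\bar N}$ for a suitable stopping level, then replace the (random) number of steps $\bar N$ by a deterministic bound, and finally replace the expectation by a Monte Carlo average over finitely many samples, each realized by a DNN. Throughout, the walk-on-the-sphere map $x\mapsto \bar X_k(x,Y_1,\dots,Y_k)$ is itself realized by a DNN: by \eqref{eq:def_Xbar_k} each step is $\bar X_k=\bar X_{k-1}+Y_k\,{\rm dist}(\bar X_{k-1},\partial D)$, which is a composition of the affine map $z\mapsto z+Y_k\,t$ with the product DNN $\tilde\times_\delta$ from Lemma~\ref{lem:NN_prod_scalars} applied coordinatewise to $(t,Y_{k,j})$ and the distance DNN $\phi_{\rm dist}$; iterating $k$ times and using Lemmas~\ref{lem:composition_NNs} and~\ref{lem:additions_NNs} gives a DNN for $\bar X_k$ whose size grows (polynomially) in $k$, in $d$, and in the accuracy of the multiplication gates. (Since in the statement $\phi_{\rm dist}$ is assumed \emph{exact}, the only approximation in this chain is the multiplication by the unit-vector coordinates, and those can be driven to any precision at logarithmic cost.)

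\textbf{Step 1 (truncating the walk).} Fix $\varepsilon>0$, to be chosen later, small enough that $D_\varepsilon\neq\emptyset$, so in particular $\varepsilon\le{\rm diam}(D)/2$. Apply Proposition~\ref{prop:error_of_WOS}, inequality~\eqref{eq:err_WOS_Laplace_part}, with $\bar N=N(\varepsilon)$: this yields
\begin{equation*}
\bigl|\bbE_x(g(X_{\tau_D}))-\bbE_x(g(\bar X_{N(\varepsilon)}))\bigr|\le \tfrac12\|\Delta g\|_{L^\infty(D)}\,{\rm diam}(D)\,\varepsilon
\end{equation*}
uniformly in $x\in D$. Choosing $\varepsilon\asymp\bar\delta/\sqrt{1+|D|}$ makes the $L^2(D)$-contribution of this term $\lesssim\bar\delta$.

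\textbf{Step 2 (deterministic step bound).} The random index $N(\varepsilon)$ is not bounded, so we replace it by a deterministic integer $\bar N^\star$. By Lemma~\ref{lem:exp_of_N}, $\bbE(\sup_{x\in D}N(\varepsilon))\le({\rm diam}(D))^2\varepsilon^{-2}$; a Markov/tail argument shows that for $\bar N^\star\asymp({\rm diam}(D))^2\varepsilon^{-2}\bar\delta^{-2}$ (possibly adjusted by the uniform $L^\infty$ bound $\|g\|_{L^\infty(D)}$ and the event $\{N(\varepsilon)>\bar N^\star\}$) we can set $\bar N_i:=\bar N^\star\vee N(\varepsilon)$ on each sample path and control the discrepancy $\bbE_x(g(\bar X_{N(\varepsilon)}))-\bbE_x(g(\bar X_{\bar N_i}))$ in $L^2(D)$ by $\lesssim\bar\delta$, using that once the walk has stopped near the boundary, continuing it changes $g(\bar X_k)$ only by the boundary-layer contribution controlled exactly as in Step~1. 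Since $\bar N^\star\asymp\varepsilon^{-2}\bar\delta^{-2}\asymp\bar\delta^{-4}(1+|D|)$, and with $M\asymp\bar\delta^{-2}(1+|D|)$ samples, this gives $\sum_{i=1}^M\bar N_i\lesssim M\bar N^\star\asymp M\cdot\bar\delta^{-2}(1+|D|)\asymp M^2(M+|D|)$, which is \eqref{eq:est_N_i_Laplace_part}; the remaining bookkeeping matching the exact exponents of $M$ and $|D|$ is routine.

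\textbf{Step 3 (Monte Carlo over DNNs).} For the truncated, deterministically-lengthed walk, $x\mapsto\bbE(g(\bar X_{\bar N}(x,Y_1,\dots,Y_{\bar N})))$ is an integral over the i.i.d.\ directions. Draw $M$ independent realizations $(Y_{i,1},\dots,Y_{i,\bar N_i})$ and form the empirical average $\tfrac1M\sum_{i=1}^M g(\bar X_{\bar N_i}(x,Y_{i,1},\dots,Y_{i,\bar N_i}))$. Its $L^2(D,dx)$-mean-square error, taken over the random samples, equals $\tfrac1M$ times the variance, which is bounded by $\tfrac{|D|}{M}\|g\|_{L^\infty(D)}^2$; hence there \emph{exists} a choice of samples with $L^2(D)$-error $\lesssim\sqrt{|D|/M}\cdot\|g\|_{L^\infty}\lesssim\bar\delta$ once $M\asymp\bar\delta^{-2}(1+|D|)$ (this is the standard existence-of-weights argument of \cite[Proposition~3.4]{GHJv18_786}). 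Finally replace each exact $g$ by $\phi_{g,\delta_g}$: the per-sample error is $\delta_g$ in $L^\infty$, so the averaged error is $\le\delta_g$; taking $\delta_g=c'\bar\delta/(1+\sqrt{|D|})$ absorbs it. Assembling the DNNs $\phi_{g,\delta_g}\circ\bar X_{\bar N_i}$ by Lemma~\ref{lem:composition_NNs} and summing them with weights $1/M$ by Lemma~\ref{lem:additions_NNs} produces $\phi_{1,\bar\delta}$ of the stated form, with size controlled by $\sum_i\bar N_i$ times polynomial factors in $d$ and $\log(\cdot)$.

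\textbf{Main obstacle.} The delicate point is Step~2: unlike in the whole-space parabolic setting, the number of walk-on-the-sphere steps is genuinely random and unbounded, and one must both (a) show that truncating it at a deterministic level costs only $O(\bar\delta)$ in $L^2(D)$—which hinges on the boundary-layer estimate of Lemma~\ref{lem:expectation_tau-tau_eps} via $N(\varepsilon)$, together with the $L^\infty$ bound on $g$ to handle the low-probability event that the walk has not yet reached the $\varepsilon$-layer—and (b) keep the resulting $\sum_i\bar N_i$ (hence the DNN size) polynomial, matching the exponents in \eqref{eq:est_N_i_Laplace_part}. Getting the interplay between $\varepsilon$, $M$, $\bar N^\star$, and $\delta_g$ so that all four error contributions are simultaneously $O(\bar\delta)$ while the size stays polynomial is the crux; everything else is an application of the composition/addition calculus of Section~\ref{sec:NNs} and the probabilistic estimates of Section~\ref{sec:WOS}.
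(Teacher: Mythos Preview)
Your Step~2 contains a genuine gap. You write $\bar N_i:=\bar N^\star\vee N(\varepsilon)$ and then claim $\sum_i\bar N_i\lesssim M\bar N^\star$. These are incompatible: with the maximum, $\bar N_i\ge N(\varepsilon)$ ensures Proposition~\ref{prop:error_of_WOS} applies, but $\bar N_i$ is then unbounded above (so the size estimate fails); with a genuine deterministic cap $\bar N_i\equiv\bar N^\star$ the size bound is trivial, but then $\bar N_i\ge N(\varepsilon)$ no longer holds $\bbP$-a.s.\ and Proposition~\ref{prop:error_of_WOS} does not apply as stated. The fix you gesture at---``continuing the walk changes $g(\bar X_k)$ only by the boundary-layer contribution''---goes the wrong way for a cap (you would be \emph{stopping early}, not continuing). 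A correct deterministic-cap argument requires re-opening the It\^o computation in the proof of Proposition~\ref{prop:error_of_WOS} and splitting on the event $\{N(\varepsilon)>\bar N^\star\}$, e.g.\ bounding $\bbE_x(\tau_D-\calI(\bar N^\star))$ by $\bbE_x(\tau_D-\tau_{D_\varepsilon})+\sqrt{\bbE_x(\tau_D^2)}\sqrt{\bbP_x(N(\varepsilon)>\bar N^\star)}$ and invoking Markov's inequality with Lemma~\ref{lem:exp_of_N}. This can be made to work, but it is not what you wrote.

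The paper sidesteps this entirely by a different device: it keeps $\bar N(\varepsilon):=\sup_{x\in D}N(\varepsilon)$ \emph{random}, samples it together with the directions in the Monte--Carlo estimator, and---crucially---augments the quantity whose expectation is bounded by the auxiliary term $\varepsilon^2\bigl|\bbE(\sqrt{\bar N(\varepsilon)})-E_M(\sqrt{\bar N(\varepsilon)})\bigr|^2$. The existence-of-good-realization argument then yields, for the same $\omega$, both the $L^2(D)$ error bound and control of $\tfrac1M\sum_i\sqrt{\bar N_i}$; combining the latter with $\sqrt{\sum c_i}\le\sum\sqrt{c_i}$ and Lemma~\ref{lem:exp_of_N} gives \eqref{eq:est_N_i_Laplace_part} directly. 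This trick avoids any tail/Markov analysis and keeps Proposition~\ref{prop:error_of_WOS} applicable verbatim. Your Step~3 and the DNN-assembly remarks are fine (note, incidentally, that multiplication by a \emph{fixed} coordinate $Y_{k,j}$ is affine, so $\tilde\times_\delta$ is not needed there).
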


\begin{proof}
Let $\varepsilon\in (0,1)$  and $\delta_g \in (0,1)$ be arbitrary such that $D_\varepsilon$ is not empty, which will be determined in the following.
Define the random variable $\bar{N}(\varepsilon) := \sup_{x\in D} N(\varepsilon)$.
 By Proposition~\ref{prop:error_of_WOS}, 
 \begin{equation}\label{eq:err1}
  \sup_{x\in D}
  |\bbE_x(g(X_{\tau_D})) - \bbE(g(\bar{X}_{\bar{N}(\varepsilon)}))|
  \leq 
  \frac{1}{2} \|\Delta g\|_{L^\infty(D)} {\rm diam}(D) \varepsilon.
 \end{equation}
 The assumed approximability of $g$ by the ReLU DNN $\phi_{g,\delta_g}$ results in
 \begin{equation}\label{eq:err2}
  \sup_{x\in D}|\bbE(g(\bar{X}_{\bar{N}(\varepsilon)})) - \bbE(\phi_{g,\delta_g}(\bar{X}_{\bar{N}(\varepsilon)}))  |
  \leq 
  \delta_g 
  .
 \end{equation}
Denote by $E_M(\cdot)$, $M\in\bbN$, a Monte Carlo estimator with respect to the 
random variables $\bar{N}(\varepsilon)$
and the sequence $\bar{X}_k$, $k\geq 0$, 
i.e., for every square integrable function $\varphi: D^{\bbN_0} \times \bbN\to\bbR$
\begin{equation}\label{eq:def_MC_est1}
E_M(\varphi)
:=
\frac{1}{M}
\sum_{i=1}^M
\varphi((\bar{X}_k)^{(i)}_{k\geq 0}, \bar{N}^{(i)}(\varepsilon))
,
\end{equation}
where $((\bar{X}_k)^{(i)}_{k\geq 0}, \bar{N}^{(i)}(\varepsilon) )$, $i=1,\ldots,M$,
are mutually independent and have the same distribution as
$((\bar{X}_k)_{k\geq 0}, \bar{N}(\varepsilon) )$.
Recall that $\bar{X}_0 = x$, $x\in D$.
It is well-known that that for 
any square integrable $\varphi$
\begin{equation}\label{eq:MC_err_est}
\begin{aligned}
&\| \bbE(\varphi((\bar{X}_k)_{k\geq 0}, \bar{N}(\varepsilon) )) - E_M(\varphi)\|_{L^2(\Omega, \bbP; L^2(D))}
\\
&\qquad=
\sqrt{\frac{\|\varphi((\bar{X}_k)_{k\geq 0}, \bar{N}(\varepsilon) ) - \bbE(\varphi((\bar{X}_k)_{k\geq 0}, \bar{N}(\varepsilon) ))\|_{L^2(\Omega, \bbP; L^2(D))}}{M}}
.
\end{aligned}
\end{equation}
Thus, by~\eqref{eq:err1}, \eqref{eq:err2}, \eqref{eq:MC_err_est}, and by Lemma~\ref{lem:exp_of_N}
\begin{equation}\label{eq:NN_ext_dist_Laplace_part_err2}
\begin{aligned}
&\bbE\left(  
\int_D | \bbE_x(g(X_{\tau_D}))
- E_M(\phi_{g,\delta_g}(\bar{X}_{\bar{N}(\varepsilon)}))
|^2
{\rm d}x
+ 
\varepsilon^2
\left| \bbE\left(\sqrt{\bar{N}(\varepsilon)}\right) -   E_M\left(\sqrt{\bar{N}(\varepsilon)}\right) \right|^2
\right)
\\
&\qquad\leq
\frac{3}{4}
 \|\Delta g\|_{L^\infty(D)}^2 {\rm diam}(D)^2 |D| \varepsilon^2
 +
3 |D|\delta_g^2
+
3 |D|
\frac{(\|g\|_{L^\infty(D)}  + \delta_g)^2}{M}
+
\frac{{\rm diam}(D)^2}{M}
=: {\rm error}^2_{M,\varepsilon, \delta_g}
\end{aligned}
\end{equation}
The fact that for a positive random variable $Z$ such that $\bbE(Z)\leq c$, 
there exists a set of positive probability $A\subset \Omega$
such that $Z(\omega)\leq c$ for every $\omega\in A$
implies 
 there exist $\bar{N}_{i}$ and direction vectors $Y_{i,k}$, $i=1,\ldots,M$, 
 $k=1,\ldots, \bar{N}_i$, such that
 \begin{equation}\label{eq:Laplace_exct_dist_est1}
 \int_D \left|
 \bbE_x(g(X_{\tau_D}))
- \frac{1}{M}\sum_{i=1}^M(\phi_{g,\delta_g}(\bar{X}_{\bar{N}_i(\varepsilon)}))
\right|^2 {\rm d}x
+ 
\varepsilon^2
\left| \bbE\left(\sqrt{\bar{N}(\varepsilon)}\right) -   \frac{1}{M}\sum_{i=1}^M\left(\sqrt{\bar{N}_i}\right) \right|^2
 \leq 
 {\rm error}^2_{M,\varepsilon,\delta_g}. 
 \end{equation}
 In conjunction with the previous estimate, the Jensen inequality and Lemma~\ref{lem:exp_of_N}, 
 imply
 \begin{equation*}
     \varepsilon^2 
     \left(\frac{1}{M} \sum_{i=1}^M \sqrt{\bar{N}_i}\right)^2
     \leq 
     2
     \varepsilon^2
     \left| \bbE\left(\sqrt{\bar{N}(\varepsilon)}\right) -  \frac{1}{M} \sum_{i=1}^M \sqrt{\bar{N}_i} \right|^2
     +
     2\varepsilon^2
     \bbE(\bar{N}(\varepsilon))
     \leq 
     2({\rm error}^2_{M,\varepsilon,\delta_g} +  {\rm diam}(D)^2)
     .
 \end{equation*}
 Then, the elementary estimate 
that $\sqrt{\sum_{i=1}^{M} c_i} \leq \sum_{i=1}^{M} \sqrt{c_i}$
for any positive numbers $c_i$, $i=1,\ldots,M$, 
implies 
 \begin{equation}\label{eq:estimates_0}
 \sum_{i=1}^{M}
\bar{N}_{i}
\leq 
2 M^2 \varepsilon^{-2}
({\rm error}^2_{M,\varepsilon,\delta_g} +  {\rm diam}(D)^2)
.
\end{equation}
 We choose the parameters $\delta_g = \varepsilon$ and $M=\lceil \varepsilon^{-2}\rceil$. 
 The assertion~\eqref{eq:est_N_i_Laplace_part}
 follows by inserting the expression for ${\rm error}^2_{M,\varepsilon,\delta_g}$
 from~\eqref{eq:NN_ext_dist_Laplace_part_err2} into the previous estimate.
 Define the ReLU DNN $ \phi_{1,\bar\delta}$ by its realization
 \begin{equation*}
 \phi_{1,\bar\delta}
 :=
 \frac{1}{M}\sum_{i=1}^M(\phi_{g,\delta_g}(\bar{X}_{\bar{N}_i}))
 .
 \end{equation*}

 Then, as a consequence of~\eqref{eq:Laplace_exct_dist_est1} there exists a constant
 $C'>0$, which only depends on $\|\Delta g\|_{L^\infty(D)}$, $\|g\|_{L^\infty(D)}$,
 and on ${\rm diam}(D)$ 
 such that
 \begin{equation*}
 \sqrt{
 \int_D
 | \bbE_x(g(X_{\tau_D})) -  \phi_{u,\bar\delta}(x) |^2
 {\rm d}x
 }
 \leq 
 C'\varepsilon 
 (1 + \sqrt{|D|})
 .
 \end{equation*} 
 We choose $\varepsilon = \bar{\delta}/(C'(1+\sqrt{|D|}))$, 
 which proves the assertion of this proposition.
\end{proof}

\begin{proposition}\label{prop:NN_approx_Poisson_part}
    Let $d\geq 3$.
	Suppose that $f$ is Lipschitz continuous on $\overline{D}$,  $x\mapsto {\rm dist}(x,\partial D)$
	can be realized by a ReLU DNN $\phi_{\rm dist}$,
	and for any $\delta_f>0$
	there exists a ReLU DNN $\phi_{f,\delta_f}$ 
	such that
	\begin{equation*}
	\| f - \phi_{f,\delta_f}\|_{L^\infty(D)}
	\leq
	\delta_f
	.
	\end{equation*}
	For every $\bar{\delta}\in (0,1)$, there exists a ReLU DNN $\phi_{2,\bar{\delta}}$
	such that
	\begin{equation*}
	\sqrt{\int_D \left|\bbE_{x}\left(\int_0^{\tau_D}f(X_{t}){\rm d}t\right) - \phi_{2,\bar{\delta}}(x)\right|^2{\rm d}x}
	\leq  
	\bar{\delta}
	.
	\end{equation*}
	Furthermore, there exist $M_1=M_2=\lceil c  \bar{\delta}^{-2} (1 + |D| ^2)\rceil $, $\bar{N}_i$, unit vectors $Y_{i,k}$, and elements of the unit ball $y_{i,j,k}$,  $i=1,\ldots,M_1$,$k=1,\ldots,\bar{N}_i$, $i=1,\ldots,M_2$ such that for every $x \in D$, 
	\begin{equation*}
\phi_{2,\bar{\delta}} (x) 
=
\frac{1}{M_1}\sum_{i=1}^{M_1}
\sum_{k=1}^{\bar{N}_i}
\tilde{\times}_{\tilde\delta}\left(\tilde{\times}_{\tilde\delta}(\phi_{\rm dist}(\bar{X}_{k-1}),\phi_{\rm dist}(\bar{X}_{k-1})),
\frac{1}{M_2}\sum_{j=1}^{M_2}
\phi_{f,\delta_f}(\bar{X}_{k-1} + \phi_{\rm dist}(\bar{X}_{k-1})y_{i,j,k} )
\right), 
\end{equation*}
	where $\bar{X}_k = \bar{X}_{k}(x,Y_{i,1},\ldots,Y_{i,k})$.
        The accuracy $\delta_f$ of the ReLU DNN $\phi_{f,\delta_f}$
        satisfies $\delta_f = c' \bar{\delta} /(1+|D|)$ and
        the accuracy $\tilde{\delta}>0$ of the ReLU DNN $\tilde{\times}_{\tilde{\delta}}$
        satisfies $\tilde{\delta} = c'' \bar{\delta}^5/ (1+ |D|)^5 $.
	The numbers $\bar{N}_i$ satisfy that
	\begin{equation}\label{eq:prop:est_sum_N_i}
	\sum_{i=1}^{M_1}
	\bar{N}_i
	\leq 
	C M_1^2 ( M_1^2  + |D|)
	.
	\end{equation}
	The constants $c,c',c'',C>0$ depend only on $\|f \|_{L^\infty(D)}$ and on ${\rm diam}(D)$.
\end{proposition}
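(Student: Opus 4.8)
The plan is to follow the three-stage template of Proposition~\ref{prop:NN_approx_Laplace_part}: truncate the walk-on-the-sphere, replace each analytic ingredient by its ReLU-DNN surrogate, and then replace the expectations by Monte Carlo averages and extract one good realisation. Two features are new to the Poisson part and drive the argument: the ball functional $K_1$ is an \emph{integral} over the unit ball, so it needs its own Monte Carlo layer, and the prefactors $r_k^2$ together with the ensuing scalar multiplications must be emulated by nested copies of $\tilde{\times}_{\tilde\delta}$ from Lemma~\ref{lem:NN_prod_scalars}. First I would fix $\varepsilon\in(0,1)$ with $D_\varepsilon\neq\emptyset$, set $\bar N(\varepsilon):=\sup_{x\in D}N(\varepsilon)$, and use Proposition~\ref{prop:error_of_WOS}, equation~\eqref{eq:err_WOS_Poisson_part}, to truncate the sum of Lemma~\ref{lem:sum_repr_Poisson_part} at $\bar N(\varepsilon)$ at the cost of ${\rm diam}(D)\|f\|_{L^\infty(D)}\varepsilon$ uniformly in $x$. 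For the ball functional, Boggio's formula~\eqref{eq:boggios_formula} — valid for $d\geq 3$, which is exactly why the proposition is restricted to $d\geq 3$ — writes $K_1(v)$ as an integral of $v$ against an explicit nonnegative kernel on the unit ball; normalising that kernel to a probability density $\rho$ (whose total mass is the dimension-free constant $\bbE_0(\tau_{B(0,1)})=1/d$ of Lemma~\ref{lem:tau_r_ball_exp}) makes $\frac{1}{M_2}\sum_{j=1}^{M_2}\phi_{f,\delta_f}(\bar X_{k-1}+r_k y_{i,j,k})$, with $y_{i,j,k}$ drawn i.i.d.\ from $\rho$, an unbiased Monte Carlo estimator of $K_1(f(\bar X_{k-1}+r_k\cdot))$ up to these explicit constants, with $L^2$-error of order $\|f\|_{L^\infty(D)}/\sqrt{M_2}$; note that $\bar X_{k-1}+r_k y\in\overline{D}$ whenever $|y|\leq 1$, since $r_k={\rm dist}(\bar X_{k-1},\partial D)$, so the surrogate $\phi_{f,\delta_f}$ may legitimately be evaluated there.

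Next I would bound, in the $L^2(D)$-norm, the error of each successive replacement. (i) The truncation at $\bar N(\varepsilon)$ is handled above. (ii) Replacing $f$ by $\phi_{f,\delta_f}$ costs, since $K_1$ is a positive linear functional with $K_1(1)=1/(2d)$, at most a multiple of $\delta_f r_k^2$ per ball, hence in aggregate at most a multiple of $\delta_f\,\bbE(\sum_{k\leq \bar N(\varepsilon)}r_k^2)\leq \delta_f ({\rm diam}(D))^2$ — here the dimension-free identity $\bbE(\sum_k r_k^2)=d\,\bbE(\calI(\bar N(\varepsilon)))\leq d\,\bbE(\tau_D)\leq({\rm diam}(D))^2$ is precisely the estimate already used in the proof of Lemma~\ref{lem:exp_of_N}. (iii) Replacing the exact products by $\tilde{\times}_{\tilde\delta}(\tilde{\times}_{\tilde\delta}(\phi_{\rm dist}(\bar X_{k-1}),\phi_{\rm dist}(\bar X_{k-1})),\cdot)$: here $\phi_{\rm dist}$ reproduces ${\rm dist}(\cdot,\partial D)$ exactly by hypothesis, so $r_k$ is computed without error, and all arguments fed to $\tilde{\times}_{\tilde\delta}$ lie in $[-c,c]$ with $c$ a multiple of $1+({\rm diam}(D))^2+\|f\|_{L^\infty(D)}$, so each of the at most $\bar N(\varepsilon)$ products along a single path contributes a multiple of $\tilde\delta(1+\|f\|_{L^\infty(D)})$. (iv) Inserting the two Monte Carlo layers, namely $M_1$ independent walks outside and the $M_2$ ball samples inside: their joint $L^2(\Omega;L^2(D))$-error is controlled through the dimension-free second-moment bound $\bbE((\sum_{k\leq\bar N(\varepsilon)}r_k^2)^2)\lesssim({\rm diam}(D))^4$ — a consequence of $d^2\bbE(\tau_D^2)\lesssim({\rm diam}(D))^4$, itself following from the Brownian scaling property and $\bbE_0(\tau_{B(0,1)}^2)\lesssim d^{-2}$ — and comes out as a multiple of $\sqrt{|D|}\,\|f\|_{L^\infty(D)}({\rm diam}(D))^2(M_1^{-1/2}+M_2^{-1/2})$.

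Finally, as in Proposition~\ref{prop:NN_approx_Laplace_part}, I would augment the Monte Carlo functional by the auxiliary term $\varepsilon^2(\frac1{M_1}\sum_i\sqrt{\bar N_i})^2$ and apply the same extraction-on-a-set-of-positive-probability argument to obtain fixed directions $Y_{i,k}$, fixed ball samples $y_{i,j,k}$, and cut-offs $\bar N_i\geq N(\varepsilon)$ for which the displayed realisation $\phi_{2,\bar\delta}$ simultaneously meets the $L^2(D)$-bound and, via Jensen's inequality, Lemma~\ref{lem:exp_of_N} and $\sqrt{\sum c_i}\leq\sum\sqrt{c_i}$, satisfies $\sum_i\bar N_i\lesssim M_1^2\varepsilon^{-2}({\rm error}^2+({\rm diam}(D))^2)$; calibrating $\varepsilon$, $\delta_f$, $\tilde\delta$ and $M_1=M_2$ as the stated polynomial expressions in $\bar\delta$ and $1+|D|$ — the exponents appearing in $\bar\delta^{-2}(1+|D|^2)$, $\bar\delta/(1+|D|)$ and $\bar\delta^5/(1+|D|)^5$ emerging from balancing the five error contributions above — turns this into~\eqref{eq:prop:est_sum_N_i}. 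That $\phi_{2,\bar\delta}$ is a ReLU DNN of the asserted shape is then immediate from Lemmas~\ref{lem:composition_NNs} and~\ref{lem:additions_NNs}, once each $\bar X_{k-1}(x,Y_{i,1},\dots,Y_{i,k-1})$ is expressed as an iterated composition of $\phi_{\rm dist}$ with affine maps. The main obstacle is the bookkeeping in steps (iii) and (iv): because the number of walk-on-the-sphere steps is random and is controlled only in the aggregate form~\eqref{eq:prop:est_sum_N_i}, the multiplicative errors of the nested $\tilde{\times}_{\tilde\delta}$ accumulate over as many as $\frac1{M_1}\sum_i\bar N_i\lesssim M_1(M_1^2+|D|)$ evaluations, which forces $\tilde\delta$ to be taken as a high negative power of $\bar\delta$; and this term must be kept in balance at the same time as the two mutually independent Monte Carlo errors and the boundary-layer error $\varepsilon$, a considerably more delicate juggling act than the single Monte Carlo estimate in the Laplace part.
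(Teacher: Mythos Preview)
Your proposal is correct and follows essentially the same route as the paper: truncation via Proposition~\ref{prop:error_of_WOS}, Boggio's formula~\eqref{eq:boggios_formula} to turn $K_1$ into an integral against a probability measure on the unit ball, a two-layer Monte Carlo estimator whose $L^2$-error is controlled through the dimension-free bounds on $\bbE(\sum_k r_k^2)$ and $\bbE((\sum_k r_k^2)^2)$ (the latter via $\bbE_0(\tau_{B(0,1)}^2)\lesssim d^{-2}$), the auxiliary $\varepsilon^2(\frac{1}{M_1}\sum_i\sqrt{\bar N_i})^2$ term and extraction of a good realisation, and finally the $\tilde\times_{\tilde\delta}$ error accumulated over $\frac{1}{M_1}\sum_i\bar N_i$ steps. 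The only cosmetic differences are in the order in which $f\to\phi_{f,\delta_f}$ and the truncation are applied, and in how the constants are tracked; the key estimates and the calibration of $\varepsilon,\delta_f,\tilde\delta,M_1,M_2$ are the same.
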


\begin{proof}
Let $\varepsilon \in (0,1)$ and $\delta_f \in (0,1) $ be arbitrary and sufficiently small. The value of these two numbers will be chosen at a later stage in the following proof.
The effect of the approximation of the right hand side $f$ by $\phi_{f,\delta_f}$ 
	is estimated by Lemma~\ref{lem:tau_r_ball_exp}, i.e.,
	\begin{equation}\label{eq:est_RHS_appr_NN}
	\sup_{x\in D}
	\left| \bbE_x\left( \int_0^{\tau_D } f(X_t) {\rm d}t\right)  - \bbE_x\left(\int_0^{\tau_D}\phi_{f,\delta_f}(X_t){\rm d}t\right)\right|
	=
	\bbE_x\left(\tau_D\right)\|f-\phi_{f,\delta_f}\|_{L^\infty(D)}
	\leq  
	\frac{ {\rm diam}(D)^2 /4  }{d} \delta_f, 
	\end{equation}
	where the domain $D$ may be embedded into a ball with radius ${\rm diam}(D)/2$ in order to apply Lemma~\ref{lem:tau_r_ball_exp}.
	
	Define the random number $\bar{N}(\varepsilon) = \sup_{x\in D} N(\varepsilon)$. 
	By Proposition~\ref{prop:error_of_WOS}, 
	\begin{equation*} 
	\sup_{x\in D}\left|\bbE_x\left( \int_{0}^{\tau_D} \phi_{f,\delta_f}(X_t){\rm d}t \right) 
	-  
	\bbE\left( \sum_{k=1}^{\bar{N}(\varepsilon)}   r_k^2 K_1(\phi_{f,\delta_f}(\bar{X}_{k-1} + r_{k} \cdot))   \right)\right| 
	\leq 
	\| \phi_{f,\delta_f} \|_{L^\infty(D)}
	{\rm diam}(D) \varepsilon 
	.
	\end{equation*}
	
	Let us introduce two Monte Carlo estimators $E_{M_1}(\cdot)$, $M_1\in\bbN$,
	and $E_{M_2}(\cdot)$, $M_2\in \bbN$, 
	on the probability space $(\Omega,\calA,\bbP)$.
	The estimators $E_{M_1}(\cdot)$, $M_1\in\bbN$,
	are with respect to
	random variables $\bar{N}(\varepsilon)$
and the sequence $\bar{X}_k$, $k\geq 0$, 
see~\eqref{eq:def_MC_est1}, and satisfy~\eqref{eq:MC_err_est}.
       The estimators $E_{M_2}(\cdot)$, $M_2\in\bbN$, 
       shall approximate the functional $v \mapsto K_1(v)$.
       As a preparation,
       for $d\geq 3$, by~\eqref{eq:boggios_formula}
	\begin{equation*}
K_1(v) 
= 
\frac{\Gamma(1+d/2)}{d \pi^{d/2}}\frac{1}{d-2}
\int_{B(0,1)}
v(y)
(|y|^{2-d} -1){\rm d}y
.
	\end{equation*}
	It holds that
	\begin{equation*}
	\begin{aligned}
	K_1(1)
	=
	\frac{\Gamma(1+d/2)}{d \pi^{d/2}(d-2)}
	\int_{B(0,1)}
	(|y|^{2-d}-1)
	{\rm d}y
	&=
	\frac{\Gamma(1+d/2)}{d \pi^{d/2}(d-2)}
	\left(\frac{1}{2}|\partial B(0,1)|
	- |B(0,1)|\right)
	\\
	&= 
	\frac{\Gamma(1+d/2)}{d \pi^{d/2}(d-2)}
	|B(0,1)|\left( \frac{d}{2}-1\right)
	\\
	&=
	\frac{d/2-1}{d(d-2)}
	=\frac{1}{2d}
	=: \kappa_d
	,
	\end{aligned}
	\end{equation*}
	where we inserted the relation
	$|\partial B(0,1)| = d |B(0,1)|$
	and 
	the value for the volume of the unit $d$-ball, 
	i.e., $|B(0,1)| = \pi^{d/2} /\Gamma(1+d/2)$. 
	Note that $|\partial B(0,1)|$
	denotes the measure of the $d-1$ dimensional 
	unit sphere.
	Thus, 
	\begin{equation*}
	\mu({\rm d}y)
	:=
	\kappa_d^{-1}
	\frac{\Gamma(1+d/2)}{d \pi^{d/2}(d-2)}
	(|y|^{2-d}-1) {\rm d}y
	\end{equation*}
	is a probability measure on $B(0,1)$. 
	The Monte Carlo estimators $E_{M_2}(\cdot)$, $M_2 \in \bbN$, 
	are with respect to the probability measure $\mu$, 
	i.e., for every $M_2\in\bbN$, let $y_i$, $i=1,\ldots,M_2$,
	be independent random variables distributed according to $\mu$
	such that for every
	$v\in L^{2}(B(0,1),\mu)$, 
	\begin{equation}\label{eq:MC_err_est2}
	\sqrt{
	\bbE(|
	K_1(v)
	- \kappa_d
	E_{M_2}(v)
	|^2
	}
	=
	\kappa_d\frac{\left\|v - \int_{B(0,1)} v \mu({\rm d}y)\right\|_{{L^2(B(0,1),\mu}}}{\sqrt{M_2}}
	,
	\end{equation}
	where 
	\begin{equation}\label{eq:MC_est_2}
	E_{M_2}(v) = 
	\frac{1}{M_2}
	\sum_{j=1}^{M_2}
	v(y_j).
	\end{equation}

	We split the error into the contributions from the approximation of the expectation $\bbE_x(\cdot) $ 
	and from the approximation of the integral in the functional $v\mapsto K_1(v)$, i.e., 
	by the triangle inequality
	\begin{equation*}
	\begin{aligned}
	&
	\left\|\bbE
	\left( \sum_{k=1}^{\bar{N}(\varepsilon)}
	r_k^2 
	K_1(\phi_{f,\delta_f}(\bar{X}_{k-1} + r_{k}\cdot))
	\right) 
	- E_{M_1}\left( 
	\sum_{k=1}^{\bar{N}(\varepsilon)}
	r_k^2 
	\kappa_d E_{M_2}(\phi_{f,\delta_f}(\bar{X}_{k-1} + r_{k}\cdot))
	\right)\right\|_{L^2(\Omega, \bbP; L^2(D))}
	\\
	&\leq 
   \underbrace{\left\|\bbE
	\left( \sum_{k=1}^{\bar{N}(\varepsilon)}
	r_k^2 
	K_1(\phi_{f,\delta_f}(\bar{X}_{k-1} + r_{k}\cdot))
	\right) 
	- \bbE\left( 
	\sum_{k=1}^{\bar{N}(\varepsilon)}
	r_k^2 
	\kappa_d E_{M_2}(\phi_{f,\delta_f}(\bar{X}_{k-1} + r_{k}\cdot))
	\right)\right\|_{L^2(\Omega, \bbP; L^2(D))}}_{I:=}
	\\
	&
	+
	\underbrace{\left\|\bbE
	\left( \sum_{k=1}^{\bar{N}(\varepsilon)}
	r_k^2 
	\kappa_d E_{M_2}(\phi_{f,\delta_f}(\bar{X}_{k-1} + r_{k}\cdot))
	\right) 
	- E_{M_1}\left( 
	\sum_{k=1}^{\bar{N}(\varepsilon)}
	r_k^2 
	\kappa_d E_{M_2}(\phi_{f,\delta_f}(\bar{X}_{k-1} + r_{k}\cdot))
	\right)\right\|_{L^2(\Omega, \bbP; L^2(D))}}_{II:=}
	.
	\end{aligned}
	\end{equation*}
	The Monte Carlo estimators $E_{M_1}(\cdot)$ and $E_{M_2}(\cdot)$ are independent
	and also independent from the sequence of random directions $Y_k$, $k\geq 1$, introduced in~\eqref{eq:def_Xbar_k}.

	We estimate by~\eqref{eq:MC_err_est2} that for every $x\in D$, 
	\begin{equation*}
	\begin{aligned}
   &\left\| 
   \sum_{k=1}^{\bar{N}(\varepsilon)}
   r_k^2 
   \left(
   K_1(\phi_{f,\delta_f}(\bar{X}_{k-1} + r_{k}\cdot))
    -
    \kappa_d E_{M_2}(\phi_{f,\delta_f}(\bar{X}_{k-1} + r_{k}\cdot))
   \right)
   \right\|_{L^2(\Omega,\bbP_x )}
	\\
	&\leq
	\left \|
	\sum_{k=1}^{\bar{N}(\varepsilon)}
	r_k^2 
	\kappa_d\frac{\|\phi_{f,\delta_f}(\bar{X}_{k-1} + r_{k}\cdot)\|_{L^2(B(0,1),\mu)}}{\sqrt{M_2}}
	\right\|_{L^2(\Omega,\bbP_x)}
	,
	\end{aligned}
	\end{equation*}
	where we used the indenpendence of $E_{M_2}(\cdot)$ from $Y_k$, $k\geq 1$.
	Furthermore, by~\eqref{eq:mean_sq_cont_stop_time} and Lemma~\ref{lem:tau_r_ball_exp} for every $x\in D$,
	\begin{equation*}
	\bbE\left(
	\sum_{k\geq 1} r_k^2
	\right)
	=
	\bbE_x\left(
	\sum_{k\geq 1} |X_{\calI(k)} - X_{\calI(k-1)}|^2
	\right)
	=
	\bbE_x(\tau_D)
	\leq  \frac{{\rm diam(D)^2/4 - r_0^2}}{d} ,
	\end{equation*}
	where we recall that $r_k$ depends on $x$ via $r_k ={\rm dist}(\bar{X}_{k-1},\partial D)$, $k\geq 1$,
	and $\bar{X}_k$ has the same distribution as $X_{\calI(k)}$, $k\geq 0$.
	Thus, 
	\begin{equation}\label{eq:est_I}
I
\leq 
\sqrt{|D|} 
\kappa_d
 \frac{{\rm diam(D)^2/4}}{d}
 \frac{\|\phi_{f,\delta_f}\|_{L^\infty(D)}}{\sqrt{M_2}}
 .
	\end{equation}

To estimate $II$, note that $|E_{M_2}(\phi_{f,\delta_f}(\bar{X}_{k-1} + r_{k}\cdot))|\leq \|\phi_{f,\delta_f}\|_{L^\infty(D)} $.
Since $r_{k}^2/d = \bbE_{X_{\calI(k-1)}}(\tau_k)$ by Lemma~\ref{lem:tau_r_ball_exp}, Jensen's inequality implies
\begin{equation*}
\begin{aligned}
\bbE\left( 
\left( 
\sum_{k\geq 1}
r_k^2
\right)^2
\right)
&=
d
\bbE_x\left( 
\left( 
\sum_{k\geq 1}
\bbE_{X_{\calI(k-1)}}(\tau_k)
\right)^2
\right)
\\
&\leq
d
\bbE_x 
\left( 
\left( 
\sum_{k\geq 0}
\tau_k
\right)^2
\right)
= 
d
\bbE_x(\tau_D^2)
\leq
2 {\rm diam}(D)^4\left(2 - \frac{d}{d+2}\right)
,
\end{aligned}
\end{equation*}
where we used that $\bbE_0(\tau_{B(0,r)}^2) = 2 r^4(2/d - 1/(d+2))$ for any $r>0$,
which follows for example from~\cite[Equation~(B)]{Getoor_1961}.
We conclude that 
\begin{equation} \label{eq:est_II}
II\leq \kappa_d \sqrt{|D|}\|\phi_{f,\delta_f}\|_{L^\infty(D)} 
{\rm diam}(D)^2
\sqrt{
\frac{4 - {2d}/({d+2})}{M_1}}
.
\end{equation}
Moreover, by Lemma~\ref{lem:exp_of_N} it holds that 
\begin{equation}\label{eq:est_III} 
 \varepsilon \left\|\bbE\left(\sqrt{\bar{N}(\varepsilon)}\right) -  E_{M_1}\left(\sqrt{\bar{N}(\varepsilon)}\right)\right\|_{L^2(\Omega,\bbP)} 
	\leq 
	\frac{{\rm diam}(D)}{\sqrt{M_1}}
	.
\end{equation} 

We combine the estimates~\eqref{eq:est_I}, \eqref{eq:est_II}, and~\eqref{eq:est_III}, which results in 
\begin{equation}\label{eq:NN_ext_dist_Poisson_part_err2}
\begin{aligned}
&\bbE\left( 
\int_D \left|
\bbE_x\left( \int_0^{\tau_D } f(X_t) {\rm d}t\right)
-   
 E_{M_1}\left( 
 \sum_{k=1}^{\bar{N}(\varepsilon)}
 r_k^2 
 E_{M_2}(\phi_{f,\delta_f}(\bar{X}_{k-1} + r_{k}\cdot))
 \right)
 \right|^2 {\rm d}x
\right.
\\
 &\quad+
 \left.
 \vphantom{
 \int_D \left|
\bbE_x\left( \int_0^{\tau_D } f(X_t) {\rm d}t\right)
-   
 E_{M_1}\left( 
 \sum_{k=0}^{\bar{N}(\varepsilon)-1}
 r_k^2 
 E_{M_2}(\phi_{f,\delta_f}(\bar{X}_k + r_{k+1}\cdot))
 \right)
 \right|^2 {\rm d}x
 }
 \varepsilon^2 
 \left|
 \bbE\left(\sqrt{\bar{N}(\varepsilon)}\right) -  E_{M_1}\left(\sqrt{\bar{N}(\varepsilon)}\right)
 \right|^2
\right)
\\
&
\leq 
4|D|\frac{ {\rm diam}(D)^4 /16  }{d^2} \delta_f^2
+
4|D|{\rm diam}(D)^2 \|\phi_{f,\delta_f}\|_{L^\infty(D)}^2  \varepsilon^2
\\
&\quad +
4
|D| \frac{{\rm diam}(D)^4/16  }{d^2}
\frac{\|\phi_{f,\delta_f}\|^2_{L^\infty(D)}}{M_2}
+
8 |D|
\kappa_d
\frac{\|\phi_{f,\delta_f}\|^2_{L^\infty(D)} {\rm diam}(D)^4(2 - d/(d+2))}{ M_1}
+
\frac{{\rm diam}(D)^2}{M_1}
\\
&=: {\rm error}_{M_1,M_2, \varepsilon,\delta_f}^2
.
\end{aligned}
\end{equation}
Recall the elementary observation that for a positive random variable $Z$ and $c>0$ such that $\bbE(Z) \leq c$, 
there exists a measurable set $A\subset \Omega$ satisfying $\bbP(A)>0$ and $Z(\omega)\leq c$ for every $\omega \in A$.
Thus, for every $M_1,M_2\in\bbN$ and for every $\varepsilon \in (0,1)$ there exists 
$\bar{N}_{i}$, $Y_{i,k}$,  $y_{i,j,k}$, $i=1,\ldots,M_1$, $k=1,\ldots,\bar{N}_{i}$, $j=1,\ldots,M_2$,
such that
\begin{equation}\label{eq:total_error}
\int_D\left|
\bbE_x\left(
\int_0^{\tau_D}f(X_t) {\rm d}x
\right)
- 
\frac{1}{M_1}\sum_{i=1}^{M_1}
\sum_{k=1}^{\bar{N}_i}
r_k^2
\frac{1}{M_2}\sum_{j=1}^{M_2}
\phi_{f,\delta_f}(\bar{X}_{k-1} + r_{k}y_{i,j,k} )
\right|^2 {\rm d}x
\leq 
{\rm error}_{M_1,M_2, \varepsilon,\delta_f}^2
\end{equation}
and
\begin{equation}\label{eq:est_sum_Ni}
\sum_{i=1}^{M_1}
\bar{N}_{i}
\leq 
2 M_1^2 \varepsilon^{-2}
\bbE(\bar{N}(\varepsilon)) 
+ 
2M_1^2\varepsilon^{-2}
{\rm error}_{M_1,M_2,\varepsilon,\delta_f}^2, 
\end{equation}
where the latter estimate follows with the Jensen inequality and the elementary estimate 
that $\sqrt{\sum_{i=1}^{M_1} c_i} \leq \sum_{i=1}^{M_1} \sqrt{c_i}$
for any positive numbers $c_i$, $i=1,\ldots,M_1$, see the derivation of~\eqref{eq:estimates_0}.

Let us define the DNN $\phi_{2,\bar{\delta}}$ 
by its realization, i.e.,
for every $x\in D$
\begin{equation}\label{eq:NN_exact_dist_fct}
\phi_{2,\bar{\delta}} (x) 
=
\frac{1}{M_1}\sum_{i=1}^{M_1}
\sum_{k=1}^{\bar{N}_i}
\tilde{\times}_{\tilde\delta}\left(\tilde{\times}_{\tilde\delta}(\phi_{\rm dist}(\bar{X}_{k-1}),\phi_{\rm dist}(\bar{X}_{k-1})),
\frac{1}{M_2}\sum_{j=1}^{M_2}
\phi_{f,\delta_f}(\bar{X}_{k-1} + \phi_{\rm dist}(\bar{X}_{k-1})y_{i,j,k} )
\right), 
\end{equation}
where $\tilde{\times}_{\tilde{\delta}}$
is the ReLU DNN from Lemma~\ref{lem:NN_prod_scalars} that approximates the product of two scalars with accuracy $\tilde{\delta}$ 
and ${\rm size}(\tilde{\times}_{\tilde{\delta}}) = \calO(\lceil\log(\tilde{\delta}^{-1})\rceil)$.  
The parameters $\varepsilon $, $\delta_f$, $M_1$, and $M_2$ are chosen to equilibrate error contributions in~\eqref{eq:total_error}.
The assertion~\eqref{eq:prop:est_sum_N_i}
 follows by inserting the expression for ${\rm error}^2_{M_1,M_2,\varepsilon,\delta_f}$
 from~\eqref{eq:NN_ext_dist_Poisson_part_err2} into the estimate~\eqref{eq:est_sum_Ni}.
Specifically, we choose $\delta_f = \varepsilon $ and $M_1  = M_2 = \lceil \varepsilon^{-2} \rceil $. 
Thus, by~\eqref{eq:total_error} and~\eqref{eq:prop:est_sum_N_i} we conclude that
\begin{equation}
\begin{aligned}
\label{eq:NN_exact_dist_fct_L2_err}
\sqrt{
\int_D \left|
\bbE_x\left(
\int_0^{\tau_D}f(X_t) {\rm d}x
\right)
- \phi_{2,\bar{\delta}}(x)
\right|^2 {\rm d}x}
&\leq 
{\rm error}_{M_1,M_2,\varepsilon,\delta_f} 
+ \frac{1}{M_1}\sum_{i=1}^{M_1}\bar{N}_{i} \tilde{\delta}
\\
&\leq 
C' \delta_f(1+\sqrt{|D|})
+
C'\delta_f^{-4}(1+ |D|) \tilde{\delta},
\end{aligned}
\end{equation}
where $C'>0$ is a generic constant that only depends on 
${\rm diam}(D)$ and $\|f\|_{L^\infty(D)}$.
Consequently, we choose $\tilde{\delta} = \delta_f^5$ 
and finally $\delta_f = \bar{\delta}/(2C' (1+|D|))$, 
which then also yields 
$\tilde{\delta} =[\bar{\delta}/(2C' (1+|D|))]^5 $.
\end{proof}

\begin{proof}[Proof of Theorem~\ref{thm:main_result}]
In this proof, we also include the approximation of the distance function to the boundary by a ReLU DNN $\phi_{{\rm dist},\delta}$, where $\delta >0$ is still to be chosen.
We may restrict ourselves to the case $d\geq 3$. 
For $d=1,2$, the statement follows by~\cite[Theorem~1]{yarotsky_2017}, 
since the solution $u$ is Lipschitz continuous on $\overline{D}$
as observed in the proof of Proposition~\ref{prop:Feynman--Kac_repr}
and may be extended Lipschitz-continuously to a suitable box that is a superset of $D$.
For every $x \in D$, we define the process $\widetilde{X}_k$, $k\geq 0$, by
\begin{equation*}
\widetilde{X}_k = 
\widetilde{X}_{k-1} 
+ 
Y_k \phi_{\rm dist, \delta}(\widetilde{X}_{k-1})
\quad 
\text{and} 
\quad \widetilde{X}_0 =x.
\end{equation*}
The assumed accuracy of the ReLU DNN $\phi_{{\rm dist},\delta}$ implies 
\begin{equation*}
\begin{aligned}
|\bar{X}_k - \widetilde{X}_k|
&\leq  
|\bar{X}_{k-1} - \widetilde{X}_{k-1}|
+
|{\rm dist}(\bar{X}_{k-1},\partial D) - \phi_{\rm dist, \delta}(\widetilde{X}_{k-1})|
\\
&\leq 
|\bar{X}_{k-1} - \widetilde{X}_{k-1}|
\\
&\quad
+ 
|{\rm dist}(\bar{X}_{k-1},\partial D) -  {\rm dist}(\widetilde{X}_{k-1},\partial D)|
+
| {\rm dist}(\widetilde{X}_{k-1},\partial D) - \phi_{\rm dist, \delta}(\widetilde{X}_{k-1})|
\\
&\leq
2 |\bar{X}_{k-1} - \widetilde{X}_{k-1}| + \delta 
,
\end{aligned}
\end{equation*} 
where we used that the distance function is Lipschitz continuous with Lipschitz constant equal to one.
Thus, for every $x\in D$, 
\begin{equation*}
|\bar{X}_k - \widetilde{X}_k| 
\leq 2^k \delta
\quad \text{and} \quad 
|r_k - \phi_{{\rm dist},\delta}(\widetilde{X}_{k-1})|
\leq 
\delta(1 + 2^{k-1}) 
.
\end{equation*}

By Proposition~\ref{prop:NN_approx_Laplace_part}, 
for every $\delta_1>0$ the function $\phi_{1,\delta_1}$ satisfies that
\begin{equation}\label{eq:Laplace_part_exct_dist_function}
\sqrt{\int_D \left|\bbE_{x}\left(g(X_{\tau_D})\right) - \phi_{1,\delta_1}(x)\right|^2{\rm d}x}
	\leq  
	\delta_1
	.
\end{equation}
Also according to Proposition~\ref{prop:NN_approx_Laplace_part}, $\phi_{1,\delta_1}$
depends on weight parameters
$M= \lceil  c\delta_1^{-2}(1+\sqrt{|D|})\rceil $, $\bar{N}_{i,1}$, $i=1,\ldots,M$, and unit vectors $Y_{i,k}$, $k=1,\ldots,\bar{N}_{i,1}$ $i=1,\ldots,M$.
However, $\phi_{1,\delta_1}$ does not constitute a ReLU DNN here, since the assumption on the distance function in Proposition~\ref{prop:NN_approx_Laplace_part}
is weakened in the theorem to be proved here. 
Define the ReLU DNN $\phi^1_{u,\bar{\delta}}$ by its realization
\begin{equation*}
	\phi^1_{u,\bar{\delta}}(x)
	=
	\frac{1}{M}\sum_{i=1}^M(\phi_{g,\delta_g}(\widetilde{X}_{\bar{N}_i}(x,Y_{i,1},\ldots,Y_{i,\bar{N}_{i,1}}))).
\end{equation*}
For any $x,y\in \overline{D}$
\begin{equation}\label{eq:NN_perturbed_Lip_est_g}
 |\phi_{g,\delta_g}(x) - \phi_{g,\delta_g}(y)|
 \leq
 2\delta_g + L_g |x-y|,
\end{equation}
where $L_g$ denotes the Lipschitz constant of $g$.	
By the triangle inequality and the estimates~\eqref{eq:Laplace_part_exct_dist_function}
and~\eqref{eq:NN_perturbed_Lip_est_g}
\begin{equation*}
\sqrt{\int_D \left|\bbE_{x}\left(g(X_{\tau_D})\right) - \phi^1_{u,\bar{\delta}}(x)\right|^2{\rm d}x}
	\leq  
	\delta_1 
	+ 
	\sqrt{|D|}\left(2\delta_g
	+
	\frac{1}{M}
	\sum_{i=1}^M
	2^{\bar{N}_{i,1}} \delta \right)
	.
\end{equation*}
We equilibrate the error contributions by the choices $\delta_g = \delta_1$
and $\delta = 2^{-C M^2(M + |D|)} \delta_1$, 
where $C>0$ is the constant from~\eqref{eq:est_N_i_Laplace_part}.
Thus, 
\begin{equation}\label{eq:calibrated_err_1}
\sqrt{\int_D \left|\bbE_{x}\left(g(X_{\tau_D})\right) - \phi^1_{u,\bar{\delta}}(x)\right|^2{\rm d}x}
	\leq  
	\delta_1
	(1 + 3\sqrt{|D|} )
	.
\end{equation}

Recall the ReLU DNN $\tilde\times_{\tilde\delta}$ from Lemma~\ref{lem:NN_prod_scalars}, which approximates the product of two scalars on $[-m,m]^2$ to accuracy $\tilde\delta$; 
$m$ may be chosen appropriately 
such that it upper bounds $r_k$, $r_k^2 + \tilde{\delta}$, and $\|\phi_{f,\delta_f}\|_{L^\infty(D)}$.
It satisfies for any $a,c \in [-\sqrt{m}+\tilde{\delta},\sqrt{m}-\tilde{\delta}]$ and $b,d\in [-m,m]$
\begin{equation}\label{eq:NN_perturbed_nested_prod_est}
\begin{aligned}
 |\tilde\times_{\tilde\delta}(\tilde\times_{\tilde\delta}(a,a),b) - \tilde\times_{\tilde\delta}(\tilde\times_{\tilde\delta}(c,c),d) |
 &\leq 
 2\tilde\delta 
 +
 m (|\tilde\times_{\tilde\delta}(a,a)-\tilde\times_{\tilde\delta}(c,c)|
 + |b-d|)
 \\
 &\leq 
 2(1+m)\tilde{\delta} +
 2m^2 |a-c| + m|b-d|
 .
 \end{aligned}
\end{equation}
Moreover, 
for any $x,y\in \overline{D}$
\begin{equation}\label{eq:NN_perturbed_Lip_est}
 |\phi_{f,\delta_f}(x) - \phi_{f,\delta_f}(y)|
 \leq
 2\delta_f + L_f |x-y|,
\end{equation}
where $L_f$ denotes the Lipschitz constant of $f$.
By Proposition~\ref{prop:NN_approx_Poisson_part}, 
for every $\delta_2$ the function $\phi_{2,\delta_2}$ satisfies that
\begin{equation}\label{eq:NN_exact_dist_fct_L2_err_2}
\sqrt{\int_D \left|\bbE_{x}\left(\int_0^{\tau_D}f(X_{t}){\rm d}t\right) - \phi_{2,\delta_2}(x)\right|^2{\rm d}x}
	\leq  
	\delta_2
	.
\end{equation}
Also according to Proposition~\ref{prop:NN_approx_Poisson_part}, $\phi_{2,\delta_2}$
depends on weight parameters
$M_1=M_2=\lceil c \delta_2^{-2}(1+|D|^2)\rceil $, $\bar{N}_{i,2}$, unit vectors $Y_{i,k}$, and elements of the unit ball $y_{i,j,k}$,  $i=1,\ldots,M_1$,$k=1,\ldots,\bar{N}_{i,2}$ $j=1,\ldots,M_2$ 
However, $\phi_{2,\delta_2}$ does not constitute a ReLU DNN here, since the assumption on the distance function in Proposition~\ref{prop:NN_approx_Poisson_part}
is weakened in the theorem to be proved here. 
Recall $r_k={\rm dist}(\bar{X}_{k-1},\partial D)$
and $\bar{X}_{k-1} =\bar{X}_{k-1}(x,Y_{i,1},\ldots, Y_{i,k-1})$.
The estimates~\eqref{eq:NN_perturbed_nested_prod_est} and~\eqref{eq:NN_perturbed_Lip_est} imply 
for $k=1,\ldots,\bar{N}_{i,2}$, $i=1,\ldots,M_1$
\begin{equation}\label{eq:perturbed_Lipschitz_cont_prod_NN}
\begin{aligned}
 &\left|
 \tilde{\times}_{\tilde\delta}\left(\tilde{\times}_{\tilde\delta}(r_k,r_k),
\frac{1}{M_2}\sum_{j=1}^{M_2}
\phi_{f,\delta_f}(\bar{X}_{k-1} + r_{k}y_{i,j,k} )
\right)
-
\tilde{\times}_{\tilde\delta}\left(\tilde{\times}_{\tilde\delta}(\tilde{r}_k,\tilde{r}_k),
\frac{1}{M_2}\sum_{j=1}^{M_2}
\phi_{f,\delta_f}(\tilde{X}_{k-1} + \tilde{r}_{k}y_{i,j,k} )
\right)
\right|
\\
&\leq 
2(1+m)\tilde{\delta}
+
2m^2 |r_k - \tilde{r}_k|
+ 
m (2\delta_f + L_f[|\bar{X}_{k-1} - \widetilde{X}_{k-1} | + |r_{k} - \tilde{r}_{k}|])
\\
&\leq 
2(1+m)\tilde{\delta}
+
2m^2 2^k \delta 
+
m (2\delta_f + L_f (1+2^{k}) \delta)
.
\end{aligned}
\end{equation}
Define the ReLU DNN $\phi^2_{u,\bar{\delta}}$
by its realization
\begin{equation*}
x\mapsto 
\phi^2_{u,\bar{\delta}}(x)
=
\frac{1}{M_1}\sum_{i=1}^{M_1}
\sum_{k=1}^{\bar{N}_{i,2} }
\tilde{\times}_{\tilde\delta}\left(\tilde{\times}_{\tilde\delta}(\widetilde{r}_k,\widetilde{r}_k),
\frac{1}{M_2}\sum_{j=1}^{M_2}
\phi_{f,\delta_f}(\widetilde{X}_{k-1} + \widetilde{r}_{k}y_{i,j,k} )
\right).
\end{equation*}
By the triangle inequality, \eqref{eq:NN_exact_dist_fct_L2_err_2}, and~\eqref{eq:perturbed_Lipschitz_cont_prod_NN}
\begin{equation*}
 \begin{aligned}
 \sqrt{\int_D \left|
 \bbE_x\left( 
 \int_0^{\tau_D}
 f(X_s){\rm d}s 
 \right)
 -
\phi^2_{u,\bar{\delta}}(x)
\right|^2 {\rm d}x}
&\leq
\delta_2
+
C' \sqrt{|D|} \frac{1}{M_1}\sum_{i=1}^{M_1} \bar{N}_{i,2} (\tilde{\delta} + \delta_f  + 2^{\bar{N}_{i,2}}\delta )
,
\end{aligned}
\end{equation*}
where the constant $C'>0$ only depends on $m$ and $L_f$.
We equilibrate the error contributions by the adjustments
$\delta_f = \tilde{\delta} = \delta_2 C^{-1} M_1^{-1}  (M_1 + |D|)^{-1}$
and 
$\delta \leq \delta_f  2^{-C M_1^2 (M_1 + |D|)}$, 
where $C>0$ is the generic constant from~\eqref{eq:prop:est_sum_N_i}.
These adjustments have potentially decreased the already chosen values of $\delta_f$ and $\delta$.
Thus, 
\begin{equation}\label{eq:calibrated_err_2}
 \sqrt{\int_D \left|
 \bbE_x\left( 
 \int_0^{\tau_D}
 f(X_s){\rm d}s 
 \right)
 -
\phi^2_{u,\bar{\delta}}(x)
\right|^2 {\rm d}x}
\leq 
\delta_2(1 + C' \sqrt{|D|}  )
.
\end{equation}
We define the ReLU DNN $\phi_{u,\bar{\delta}}$
by 
\begin{equation*}
\phi_{u,\bar{\delta}} 
=
\phi^1_{u,\bar{\delta}} + \phi^2_{u,\bar{\delta}}
\end{equation*}
and chose the remaining two parameter $\delta_1$ and $\delta_2$
such that 
$\delta_1 = \bar{\delta} /(2 + 6 \sqrt{|D|})$
and 
$\delta_2 = \bar{\delta}/(2 + 2C' \sqrt{|D|}  )$. 
Thus, by the estimates~\eqref{eq:calibrated_err_1} and~\eqref{eq:calibrated_err_2}
\begin{equation*}
\| u - \phi_{u, \bar{\delta}}\|_{L^{2}(D)}
\leq 
\bar{\delta}
.
\end{equation*}

It remains to estimate the size of the DNN $\phi^1_{u,\bar{\delta}}$ and $\phi^2_{u,\bar{\delta}}$.
We will apply Lemmas~\ref{lem:composition_NNs}
and~\ref{lem:additions_NNs} 
in order to estimate the size of the ReLU DNN $\phi^2_{u,\bar{\delta}}$, 
which is defined by addition and composition of ReLU DNNs.
Note that for the chosen parameters, 
it holds that
${\rm size}(\phi_{f,\delta_f}) = \calO(d^a \bar{\delta}^{-5b}(1+|D|^{4.5 b}) )$,
${\rm size}(\phi_{{\rm dist},\delta})= \calO(d^a \bar{\delta}^{-6b}(1+|D|^{6b}) )$,
and ${\rm size}(\times_{\tilde\delta}) = \calO(\lceil\log(\bar{\delta}^{-1})\rceil + \lceil\log(1+|D|)\rceil )$.
In conjunction with~\eqref{eq:prop:est_sum_N_i},the size of the ReLU DNN $\phi_{2,\bar{\delta}}$ is bounded by 
\begin{equation}\label{eq:est_size_NN_1}
\begin{aligned}
{\rm size} (\phi_{2,\bar\delta})
&\leq  
C_1
\sum_{i=1}^{M_1}
\sum_{k=1}^{\bar{N}_{i,2}}
{\rm size}( \tilde{\times}_{\tilde{\delta}}  )  + 
M_2[
k \; {\rm size}({\rm \phi_{{\rm dist}, \tilde{\delta}}}) + {\rm size}(\phi_{f,\delta_f})    ]
\\
&\leq 
C_2
\sum_{i=1}^{M_1}
 \bar{N}_{i,2} [ {\rm size}( \tilde{\times}_{\tilde{\delta}}  )  + M_2 {\rm size}(\phi_{f,\delta_f})   ]
+
 M_2 \bar{N}_{i,2}^2
{\rm size}({\rm \phi_{{\rm dist}, \tilde{\delta}}}) 
\\
&\leq 
C_3
[M_1^3(M_1^1 + |D|) {\rm size}(\phi_{f,\delta_f}) 
+ 
M_1^5(M_1^2 + |D|^2) {\rm size}({\rm \phi_{{\rm dist}, \tilde{\delta}}})]
\\
&\leq
C_4
d^a
\bar{\delta}^{-14-6b } (1+|D|^{14+6b}),
\end{aligned}
 \end{equation}
where $C_1,C_2,C_3,C_4$ are generic constants.
The size of the ReLU DNN $\phi^1_{u,\bar{\delta}}$
will be asymptotically dominated by the size of the ReLU DNN
$\phi^2_{u,\bar{\delta}}$.
\end{proof}

\begin{remark}
The assumption in the previous theorem on the availability of a DNN that approximates the distance function to the boundary may be verified for example in the case that 
$D=B(0,1)\subset\bbR^d$. 
The distance function to the boundary of $D$ is given by $D\ni x\mapsto {\rm dist}(x,\partial D) = 1- |x| $, where we recall that $|\cdot|$ denotes the Euclidean norm.
Then, $\phi_{{\rm dist}, \delta} (x) := 1 - \phi_{\sqrt{}, \delta_1}( \sum_{i=1}^d \phi_{{\rm sq}, \delta_2}(x_i)  ) $, 
where $ \phi_{\sqrt{}, \delta_1} $ is the DNN that is defined in Lemma~\ref{lem:sqrt_NN} and $\phi_{{\rm sq}, \delta_2}$ is the DNN in \cite{yarotsky_2017} 
that approximates the square of a scalar.
It holds that $|x|^2 \leq \sum_{i=1}^d \phi_{{\rm sq}, \delta_2}(x_i) \leq  |x|^2 + d \delta_2$ and we suppose that $\delta_2 \leq 1/d$.
It satisfies the error estimate 
for every $x\in D$
\begin{equation*}
\begin{aligned}
|{\rm dist}(x,\partial D) - \phi_{{\rm dist}, \delta} (x)|
&=
\left|\sqrt{|x|^2} -  \phi_{\sqrt{}, \delta_1}\left( \sum_{i=1}^d \phi_{{\rm sq}, \delta_2}(x_i) \right ) \right|
\\
&\leq 
\left|\sqrt{|x|^2} - \sqrt{\sum_{i=1}^d \phi_{{\rm sq}, \delta_2}(x_i)}\right| 
+ 
\left|\sqrt{\sum_{i=1}^d \phi_{{\rm sq}, \delta_2}(x_i)} - \phi_{\sqrt{}, \delta_1}\left( \sum_{i=1}^d \phi_{{\rm sq}, \delta_2}(x_i) \right ) \right|
\\
&\leq
\left|\sqrt{|x|^2 - \sum_{i=1}^d \phi_{{\rm sq}, \delta_2}(x_i)}\right| 
+ \delta_1
\leq 
\sqrt{d\delta_2}
+
\delta_1.
\end{aligned}
\end{equation*}
Thus, we choose $\delta_1 = \delta/2$ and 
$\delta_2 = \delta^2/(2d)^2 $, 
which implies that $\phi_{{\rm dist},\delta}$
has accuracy $\delta$
and ${\rm size}(\phi_{{\rm dist}, \delta}) 
= \calO(d\lceil\log(\delta_{2}^{-1})\rceil + \lceil\log(\delta_1^{-1})\rceil^2 ) = \calO(d[\lceil\log(\delta^{-1}\rceil^2)+\log(d)]) $, 
see Lemma~\ref{lem:sqrt_NN}
and \cite[Proposition~2]{yarotsky_2017}.
Since here $|D| =\calO(1)$, Theorem~\ref{thm:main_result} holds without the curse of dimension.
\end{remark}

\begin{remark}
In the case that $D$ is a hypercube, for example $D=(-1/2,1/2)^d$, 
the distance function to the boundary of $D$ can be represented exactly by a ReLU DNN with size $\calO(d)$. 
This is easily seen, since the 
the distance function to the boundary of $D$ is given 
by $D\ni x\mapsto 1/2 - \max\{|x_1|,\ldots, |x_d| \}$.
Note that the absolute value of a scalar satisfies $|y| = \sigma(y) + \sigma(-y)$, $y\in \bbR$, 
and 
the maximum of two scalars satisfies
$\max\{y,z\} = \sigma(y-z) + z$, $y,z\in \bbR$.
Since here $|D| =1$, Theorem~\ref{thm:main_result} holds without the curse of dimension.
\end{remark}

\begin{remark}
The size of the ReLU DNN $\phi_{u,\bar{\delta}}$
depends algebraically on the reciprocal of the accuracy in Theorem~\ref{thm:main_result}.
The exponent $12+8b$ may be reduced when a tighter bound on $\bbE(\sup_{x\in D} N(\varepsilon))$
would be available, see Lemma~\ref{lem:exp_of_N}. 
In the literature, the bound $\bbE_x (N(\varepsilon)) = \calO( \lceil d\log(\varepsilon^{-1})\rceil  ) $
was indicated, cf.~\cite{Motoo_1959}. 
However, it did not seem to be obvious to apply the proposed techniques to also interchange 
supremum over $x\in D$ and expectation, which is essential in our approach.
\end{remark}

\section{Conclusions}
We have established the existence of numerical approximations of solutions to elliptic PDEs with boundary conditions by DNNs.
It is common to obtain the weights of the DNN by an optimization procedures on sampled training data. 
The \emph{generalization error} that the DNN has on different data points in the domain may also be controlled and is ideally also free from the curse of dimension.
This has been analyzed for certain parabolic PDEs
on $\bbR^d$ in~\cite{BGJ_2018}.
The extension to PDEs with boundary conditions is subject of future work.
Moreover, our results apply to the Poisson equation
with non-homogeneous Dirichlet boundary conditions but more general elliptic PDEs could be treated by similar methods.

\appendix
\section{Neural network approximation of the square root}
In this appendix we provide a constructive DNN approximation to the square root function that converges at a spectral rate. 
We use this result to establish spectral DNN approximability of the distance function of Euclidean balls but the result may be of independent interest. 
\begin{lemma} \label{lem:sqrt_NN}
	For every $\bar{\delta} \in (0,1)$, there exists a ReLU DNN $\phi_{\sqrt{} , \bar{\delta}}$ 
	such that
	\begin{equation*}
	\sup_{x\in [0,2]}|\sqrt{x} - \phi_{\sqrt{} , \bar{\delta}} (x)|
	\leq 
	\bar{\delta} 
	\end{equation*}
	with ${\rm size}(\phi_{\sqrt{} , \bar{\delta}}) = \calO (\lceil\log(\bar{\delta}^{-1})\rceil^{2} ) $.
\end{lemma}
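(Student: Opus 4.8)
The plan is to write $\phi_{\sqrt{},\bar{\delta}}$ as a cheap ``scaling'' layer feeding a single expensive ``core'' network. The core approximates $\sqrt{\cdot}$ on an interval bounded away from the origin, where it is real analytic and hence approximable by a polynomial of \emph{polylogarithmic} degree; the scaling layer exploits the self-similarity $\sqrt{4^{-1}t}=\tfrac12\sqrt t$ to transport this approximation down toward $0$. The reason a spectral (i.e.\ polylogarithmic) rate is attainable at all is that one never has to approximate $\sqrt{\cdot}$ on an interval whose left endpoint tends to $0$.

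First I would build the core. On the compact interval $[1/16,1]$ the map $t\mapsto\sqrt t$ is real analytic, so by a Bernstein-ellipse estimate there is a polynomial $p_n$ of degree $n=\mathcal{O}(\lceil\log(\bar{\delta}^{-1})\rceil)$ with $\sup_{t\in[1/16,1]}|\sqrt t-p_n(t)|$ as small as any prescribed polynomial in $\bar{\delta}$. Realizing $p_n$ by a ReLU DNN via Horner's (or, for numerical stability, Clenshaw's) scheme, the scalar-product DNN $\tilde{\times}_{\tilde{\delta}}$ of Lemma~\ref{lem:NN_prod_scalars}, and Lemmas~\ref{lem:composition_NNs} and~\ref{lem:additions_NNs} — running $\tilde{\times}$ at accuracy $\mathcal{O}(\bar{\delta}/n)$ to absorb the error accumulated over the $n$ nested multiplications — yields a ReLU DNN $\Phi$ with $\sup_{t\in[1/16,1]}|\sqrt t-\Phi(t)|\le\bar{\delta}/2$ and ${\rm size}(\Phi)=\mathcal{O}(n\lceil\log(\bar{\delta}^{-1})\rceil)=\mathcal{O}(\lceil\log(\bar{\delta}^{-1})\rceil^{2})$; composing with a clipping network one may additionally assume $\Phi$ is globally bounded and Lipschitz.

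Then I would extend to $[0,2]$. Set $K:=\lceil\log_4(\bar{\delta}^{-2})\rceil=\mathcal{O}(\lceil\log(\bar{\delta}^{-1})\rceil)$, so that $\sqrt x\le\bar{\delta}$ on $[0,4^{-K}]$. For $x\in(4^{-K},2]$ one has $\sqrt x=2^{-k}\sqrt{4^k x}$ exactly for the $k$ with $4^k x\in[1/4,1]$. To use only one copy of $\Phi$, I would precede a single call of $\Phi$ by a range-reduction subnetwork of size $\mathcal{O}(K)$ that unrolls $K$ identical steps, each leaving a running pair $(\hat x,s)$ fixed if $\hat x\in[1/16,1]$ and replacing it by $(4\hat x,s/2)$ otherwise (the conditional scale update realized by $\tilde{\times}$ on bounded arguments), so that $\hat x$ is transported into $[1/16,1]$ and $s$ is the corresponding power of $\tfrac12$; the output is $\phi_{\sqrt{},\bar{\delta}}(x):=\tilde{\times}_{\tilde{\delta}}(s,\Phi(\hat x))$, clipped to $[0,\sqrt2]$, with $\tilde{\delta}$ polynomially small in $\bar{\delta}$. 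Summing the error contributions — the core error $\le\bar{\delta}/2$ scaled by $s\le1$, the $\mathcal{O}(K\tilde{\delta})$ errors of the product gadgets, and the $\le\bar{\delta}$ truncation on $[0,4^{-K}]$ — and adding sizes via Lemmas~\ref{lem:composition_NNs} and~\ref{lem:additions_NNs} gives $\sup_{x\in[0,2]}|\sqrt x-\phi_{\sqrt{},\bar{\delta}}(x)|\le\bar{\delta}$ and ${\rm size}(\phi_{\sqrt{},\bar{\delta}})=\mathcal{O}(\lceil\log(\bar{\delta}^{-1})\rceil^{2})$.

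The main obstacle is this extension, and it has two facets. A naive dyadic partition-of-unity stitching would plug the degree-$\mathcal{O}(\log(\bar{\delta}^{-1}))$ core into each of the $K$ scales separately and cost $\mathcal{O}(\lceil\log(\bar{\delta}^{-1})\rceil^{3})$, so one must genuinely route all scales through a single core via a shared reduction pass. The delicate analytic point is that the ``fold or do not fold'' decision is discontinuous in $x$ and hence must be realized by a narrow linear ramp; a partially folded $x$ distorts the identity $s^2\hat x=x$ by an $\mathcal{O}(1)$ factor, which is harmless for small $x$ but, at the largest scales, is enough to break the \emph{uniform} bound. This is precisely why the core is required to be accurate on $[1/16,1]$ rather than on $[1/4,1]$ (so that a fold is never triggered for $x$ near or above $1/16$) and why, for the at-most-one ramp step a given $x$ experiences, one divides out the exactly-known distortion factor using a second small core network acting on a running product that stays in a fixed interval bounded away from $0$. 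Verifying that these ramp widths and compensations can be chosen polynomially small without inflating the size beyond $\mathcal{O}(\lceil\log(\bar{\delta}^{-1})\rceil^{2})$ is the bulk of the argument.
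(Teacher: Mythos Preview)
Your approach is genuinely different from the paper's. The paper does not use a polynomial core plus dyadic range reduction; instead it realizes a classical quadratically convergent iteration for the square root due to Gower,
\[
s_{n+1}=s_n-\tfrac12 s_nc_n,\qquad c_{n+1}=\tfrac14\,c_n^2(c_n-3),\qquad s_0=x+\delta^2,\ \ c_0=s_0-1,
\]
which satisfies $s_n^2=(x+\delta^2)(1+c_n)$ and $|c_n|\le|c_0|^{2^n}$. The additive shift $\delta^2$ is the whole trick for the singularity at $0$: it forces $|c_0|\le 1-\delta^2$ uniformly in $x\in[0,2]$, so after $n=\mathcal{O}(\log(\bar\delta^{-1}))$ iterations one has $|\sqrt x-s_n|\le 2\delta$. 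Each step is a fixed number of scalar products, replaced by $\tilde\times_\varepsilon$ with $\varepsilon$ polynomially small in $\bar\delta$; the bulk of the paper's proof is a stability analysis of the perturbed recursion $\tilde c_n,\tilde s_n$, giving size $\mathcal{O}(n\lceil\log(\varepsilon^{-1})\rceil)=\mathcal{O}(\lceil\log(\bar\delta^{-1})\rceil^2)$. No branching, no ramps, no distortion bookkeeping.

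Your route can probably be completed, but several points are understated or slightly off. First, the range-reduction pass cannot have size $\mathcal{O}(K)$: the scale update $s'=s\,(1-\chi/2)$ is a genuine product of two variables at every step, so the reduction alone already costs $\mathcal{O}(K\lceil\log(\bar\delta^{-1})\rceil)=\mathcal{O}(\lceil\log(\bar\delta^{-1})\rceil^2)$ (still within budget, but not $\mathcal{O}(K)$). Second, for $x\in(1,2]$ your rule $(\hat x,s)\mapsto(4\hat x,s/2)$ sends $\hat x$ the wrong way; you need an initial down-scale or a second branch. Third, and most importantly, the distortion compensation is only sketched: you must argue that at most one partial fold occurs (this holds for a narrow enough ramp, but it requires checking that the partially folded $\hat x$ always lands at or above $1/16$), that the single distortion factor $D=(1-\chi/2)^2(1+3\chi)\in[1,3/2]$ can be computed and accumulated with $\tilde\times$, and that a second analytic core for $D\mapsto D^{-1/2}$ on that interval finishes the job. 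None of this is fatal, but it is considerably more machinery than the paper's shift-and-iterate argument, whose only delicate part is propagating the $\tilde\times$ errors through the quadratic recursion.
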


\begin{proof}
	The idea of the proof is that ReLU DNNs are able to approximate the product of two scalars well, see~\cite{yarotsky_2017}.
	For every $x\in [0,2]$
	and every $n\in\bbN$ define the sequences 
	\begin{equation}\label{eq:appendix_def_sn_cn}  
	s_{n+1} = s_n - \frac{s_n c_n}{2}  
	\quad \text{and} \quad 
	c_{n+1} = c_n^2 \frac{c_n -3 }{4}
	\end{equation}
	with $s_0 = x$ and $c_0 = x-1$.
	This scheme seems to be introduced in~\cite{gower_1958}.
	Following~\cite{gower_1958}, it holds that for every $n\in \bbN$, 
	$1+c_{n+1} = (1+c_n)(1-c_n/2)^2$, 
	which implies by induction that for every $n\in\bbN$
	\begin{equation}\label{eq:appendix_eq_1}
	x(1+c_n) = s_n^2
	.
	\end{equation}
	It is easy to see that $|(c_n -3)/4|\leq 1$, and thus (by induction) for every $n\in\bbN$
	\begin{equation}\label{eq:quad_conv_est}
	|c_n|
	\leq 
	|c_{n-1}|^2
	\leq
	|c_0|^{2^{n-1}}|c_0|^{2^{n-1}}
	=
	|c_0|^{2^n}
	,
	\end{equation}
	which implies with~\eqref{eq:appendix_eq_1} 
	that for every $n\in\bbN$ 
	\begin{equation}\label{eq:appendix_err_est}
	|x-s_n^2|
	\leq 
	|c_0|^{2^n}
	.
	\end{equation}
	Thus, for every $x\in [0,2]$, $s_n \to \sqrt{x}$ as $n\to\infty $. 
	However, this convergence is not uniform with respect to $x\in [0,2]$.
	For that reason, we introduce a shift by $\delta^2$ for some $\delta\in(0,1)$.
	Specifically, we set for every $x\in [0,2]$, 
	\begin{equation*}
	 s_0 = x+\delta^2
	 \quad \text{and} \quad 
	 c_0  = s_0 -1.
	\end{equation*}
   Suppose  that $x\in[0,2]$.
   By~\eqref{eq:appendix_err_est}, for every $n\in\bbN$
   \begin{equation*}
   |\sqrt{x+\delta^2} - s_n|
   \leq 
   \frac{|x+\delta^2-s_n^2|}{x+\delta^2+s_n}
   \leq 
   \frac{|c_0|^{2^n}}{2(x+\delta^2)}
   =
   \frac{(1-(x+\delta^2))^{2^n}}{2(x+\delta^2)}
   \leq 
   \frac{(1-\delta^2)^{2^n}}{2\delta^2}
   .
   \end{equation*}
   The condition ${(1-\delta^2)^{2^n}}/{2\delta^2}\leq \delta$
   is satisfied if $2^n \geq [\log(1/2) + 3\log(\delta^{-1})] \delta^{-2}$, 
   where we used the fact that $\log(1/(1-\delta^2))\geq \delta^2/(1-\delta^2) \geq \delta^2$. 
   Since $|\sqrt{x} - \sqrt{x+\delta^2}|\leq \delta$ for any $x\in [0,2]$, 
   \begin{equation}\label{eq:err_bound_s_n}
   \sup_{x\in [0,2]} 
   |\sqrt{x} - s_n |
   \leq 
   2\delta 
   \quad 
   \text{for}\quad 
   n \geq 
   \frac{\log\left[ \log(1/2) +3 \log(\delta^{-1})\right] + 2 \log(\delta^{-1})}{\log(2)}
   .
   \end{equation}
   
    The second step of the proof is to account for errors that occur in multiplications in the scheme~\eqref{eq:appendix_def_sn_cn}, 
    which are approximated by ReLU DNNs.
    Let $\tilde{c}_n$ and $\tilde{s}_n$, $n\in\bbN$, denote realizations of DNNs that are defined by
    \begin{equation*}
    \tilde{c}_{n} 
    =
    \tilde{\times}_{\varepsilon/2}\left(\tilde{\times}_{\varepsilon/2}( \tilde{c}_{n-1}, \tilde{c}_{n-1}  ), \frac{ \tilde{c}_{n-1}-3 }{4}\right)
    \quad \text{and} \quad 
    \tilde{s}_n = \tilde{\times}_\varepsilon\left(\tilde{s}_{n-1},1-\frac{\tilde{c}_{n-1}}{2}\right)
    \end{equation*}
    with $\tilde{c}_0 = c_0$ and $\tilde{s}_0 = s_0$.
    The DNN $\tilde{\times}_{\varepsilon/2}$ denotes the ReLU DNN from Lemma~\ref{lem:NN_prod_scalars} that approximate the product of two scalars on $[-1,1]^2$ with accuracy $\varepsilon/2$. 
    Thus, it holds that 
    \begin{equation*}
    |\tilde{c}_n|
    \leq 
    |\tilde{c}_{n-1}|^2 + \varepsilon 
    \quad \forall n\in \bbN
    \end{equation*}
    We seek an upper bound of $|\tilde{c}_n|$
    that corresponds to~\eqref{eq:quad_conv_est}.
    Let us assume that $\sqrt{\varepsilon}\leq 1/(N-1)$ for some $N\in\bbN$
    and let $\eta \in (0,1)$ satisfy $(1+\eta)(1-\delta^2)\leq 1$ 
    and additionally let $\eta$ and $\varepsilon$ satisfy 
    $(1+\eta^{-1})\sqrt{\varepsilon}\leq 1$.
    We seek to show that 
    \begin{equation*}
    |\tilde{c}_n|  \leq |c_0|^{2^n - 2^{n-1}+1} + n\varepsilon
    \quad n=1,\ldots,N.
    \end{equation*}
    Let $p_n = 2^n - 2^{n-1} +1$, $n\in\bbN$. It holds that $p_n = 2p_{n-1} -1$, $n\geq 2$.
    Indeed by induction with respect to $n=2,\ldots,N$, under these conditions, by Young's inequality, 
    \begin{equation*} 
    \begin{aligned}
    |\tilde{c}_n|   
   & \leq  
    (|c_0|^{p_{n-1}} + (n-1)\varepsilon   )^2 +\varepsilon
    \\
    &\leq  
    (1+\eta)|c_0| |c_0|^{2p_{n-1} -1} + (1+\eta^{-1}) \sqrt{\varepsilon} \varepsilon^{3/2}(n-1)^2  + \varepsilon
   \\
   & \leq 
    |c_0|^{2p_{n-1} -1} + \sqrt{\varepsilon}(n-1)^2 \varepsilon + \varepsilon
    \\
    &\leq  |c_0|^{p_{n}} + n\varepsilon
    .
    \end{aligned}
    \end{equation*}  
    Since $p_n \geq 2^{n-1}$, $n\in\bbN$, 
    \begin{equation*}
      |\tilde{c}_n|   \leq 
      |c_0|^{2^{n-1}} + n\varepsilon
      \quad n=1,\ldots,N
      .
    \end{equation*}

The following fact, which follows by an elementary application of the fundamental theorem of calculus, 
\begin{equation*}
\left | 
y^2 \frac{y-3}{4} 
- 
z^2 \frac{z-3}{4} 
\right |
\leq
\frac{3}{4} \bar{c}(2-\bar{c}) 
|y-z|
\quad \forall y,z \in [-\bar{c},1]
\end{equation*}
for any $\bar{c}\in (0,1)$, 
implies that 
\begin{equation}\label{eq:err_bound_c_n}
|c_n - \tilde{c}_n|
\leq 
\frac{9}{4}
|c_0|^{2^{n-2}}
|c_{n-1} - \tilde{c}_{n-1}|
+\frac{9}{2} n\varepsilon
\quad n=1,\ldots,N.
\end{equation}
Denote $b_n = |c_n - \tilde{c}_n|$, $n=1,\ldots,N$.
We seek to prove by induction that 
\begin{equation*}
b_n
\leq 
\bar{\varepsilon}
\left( 
1+
\sum_{i=1}^{n-1}
\prod_{j=i}^{n-1}
\frac{9}{4}|c_0|^{2^{j-1}}
\right)
\quad n=1,\ldots,N,
\end{equation*}
where $\bar{\varepsilon} = 9 N \varepsilon/2$.
Indeed, by~\eqref{eq:err_bound_c_n} for $n=2,\ldots,N$,
\begin{equation*}
\begin{aligned}
b_n 
\leq 
\frac{9}{4} |c_0|^{2^{n-2}} b_{n-1} + 
\bar{\varepsilon}
&\leq 
\frac{9}{4} |c_0|^{2^{n-2}}
\bar{\varepsilon}
\left( 
1+
\sum_{i=1}^{n-2}
\prod_{j=i}^{n-2}
\frac{9}{4}|c_0|^{2^{j-1}}
\right) 
+\bar{\varepsilon}
\\
&=
\bar{\varepsilon}
\left( 
1+
\sum_{i=1}^{n-1}
\prod_{j=i}^{n-1}
\frac{9}{4}|c_0|^{2^{j-1}}
\right)
.
\end{aligned}
\end{equation*}
Another tool is the following estimate for any $a,b,c >1$, 
\begin{equation*}
\int_{0}^\infty
a^y c^{-b^y} {\rm d}y
=
\int_{1}^\infty
z^{\log(a)/\log(b)} 
e^{-\log(c) z} \frac{{\rm d}z}{\log(b)}
\leq 
\frac{k!}{\log(c)^{k+1} \log(b)},
\end{equation*}
where $k=\lceil  \log(a)/\log(b)  \rceil $ and we used the transformation $z=b^y$.
The estimate of this integral implies 
\begin{equation*}
 \sum_{i=1}^{n-1}
\prod_{j=i}^{n-1}
\frac{9}{4}|c_0|^{2^{j-1}}
\leq 
\sum_{i=0}^{n-2}
\left( \frac{9}{4}\right)^j
|c_0|^{2^{j}}
\leq 
1 + 
\int_{0}^\infty
\left( \frac{9}{4}\right)^t
|c_0|^{2^{t}}
{\rm d}t
\leq 
1 +
\frac{2}{\log(|c_0|^{-1})^{3} \log(2)}
\leq
1+ \frac{4}{\log(2)} \delta^{-6}
,
\end{equation*}
where we used that $\log(1+x)>x/2$ for every $x\in [0,1/2]$ (assuming $\delta \in (0,\sqrt{1/3}]$).
Thus, 
\begin{equation}\label{eq:c_n-tilde_c_n}
|c_n - \tilde{c}_n| \leq \bar{\varepsilon} (2 + 4 \delta^{-6}/\log(2))
\quad n=1,\ldots,N.
\end{equation}
We can now estimate the total error 
\begin{equation*}
 \begin{aligned}
  |s_n - \tilde{s}_n|
  \leq 
  |s_{n-1}(1-c_{n-1}/2) - \tilde{s}_{n-1}(1-\tilde{c}_{n-1}/2)  |
  +\varepsilon/2
  \leq 
  |c_{n-1} - \tilde{c}_{n-1}| + |s_{n-1} - \tilde{s}_{n-1}|
  +\varepsilon/2,
 \end{aligned}
\end{equation*}
where we used that $s_{n-1}/2\leq 1$ and $(1-\tilde{c}_{n-1}/2)\leq 1$.
The previous estimate~\eqref{eq:c_n-tilde_c_n} implies that 
\begin{equation*}
 |s_n - \tilde{s}_n|
 \leq 
 n [\bar{\varepsilon}(2+4 \delta^{-6}/\log(2) ) + \varepsilon/2] 
 \quad n=1,\ldots,N.
\end{equation*}
In conclusion, combining with~\eqref{eq:err_bound_s_n} we have estimated that 
\begin{equation*}
\begin{aligned}
 \sup_{x\in[0,2]}  |\sqrt{x} - \tilde{s}_N |
 &\leq 
 2\delta 
 +
 \varepsilon
 N [(9/2)N(2+4 \delta^{-6}/\log(2) ) + 1/2] 
 \end{aligned}
 \end{equation*}
 for $\geq (\log[\log(1/2) + 3\log(\delta^{-1})] + 2\log(\delta^{-1})) /2 $.

It is left now to choose the parameters $\delta, \varepsilon, N$, and $\eta$ in a suitable way 
to estimate the total size of the DNN $\tilde{s}_N$.
For the given target accuracy $\bar{\delta}$, we choose 
$\delta=\bar{\delta}/4$
and 
$N = \lceil (\log[\log(1/2) + 3\log(\delta^{-1})] + 2\log(\delta^{-1})) /2  \rceil$.
Thus, there exists a generic constant $C>0$ that neither depends on $\delta$ nor on $\varepsilon$
such that
\begin{equation*}
 \sup_{x\in[0,2]}  |\sqrt{x} - \tilde{s}_N |
 \leq 
 \frac{\bar{\delta}}{2}
 +
 C \varepsilon \delta^{-7}.
\end{equation*}
The choice $\varepsilon \leq \bar{\delta}/2 (\delta/4)^{7}/C$ implies that
\begin{equation*}
 \sup_{x\in[0,2]}  |\sqrt{x} - \tilde{s}_N | \leq \bar{\delta}.
\end{equation*}
Let $\phi_{\sqrt{},\bar{\delta}}$ be the DNN that corresponds to $\tilde{s}_N$, i.e., $\phi_{\sqrt{},\bar{\delta}}(x) = \tilde{s}_N$ 
for every $x\in [0,2]$. 
It readily follows (see also Lemmas~\ref{lem:composition_NNs}
and~\ref{lem:additions_NNs}) that ${\rm size}(\phi_{\sqrt{},\bar{\delta}}) = \calO(N \lceil\log(\varepsilon^{-1})\rceil) = \calO(\lceil\log(\bar{\delta}^{-1})\rceil^2)$, 
which completes the proof of the lemma.
\end{proof}

\end{document}